\theoremstyle{plain}
\newtheorem{theorem}{Theorem}[section]
\newtheorem{lemma}[theorem]{Lemma}
\newtheorem{corollary}[theorem]{Corollary}
\newtheorem{proposition}[theorem]{Proposition}
\theoremstyle{definition}
\theoremstyle{remark}
\newcommand{\RR}{\mathbb{R}}
\title{Sums of squares on the hypercube}
\author{Grigoriy Blekherman}
\address{Grigoriy Blekherman,
School of Mathematics,
Georgia Institute of Technology,
686 Cherry Street,
Atlanta, GA 30332-0160 USA}
\email{greg@math.gatech.edu}
\author{Jo{\~a}o Gouveia}
\address{Jo{\~a}o Gouveia, CMUC, Department of Mathematics,
  University of Coimbra, 3001-454 Coimbra, Portugal}
\email{jgouveia@mat.uc.pt} 
\author{James Pfeiffer}
\address{James Pfeiffer, Department of Mathematics,
University of Washington, Seattle, WA 98195}
\email{jamesrpfeiffer@gmail.com}
\thanks{The authors were partially supported on this project 
as follows: GB by the Sloan Research Fellowship, JG by `Centro de Matem\'{a}tica da Universidade de Coimbra' and `Funda\c{c}\~{a}o para a Ci\^{e}ncia e a Tecnologia', through European program COMPETE/FEDER; and JP by NSF grant DMS-1115293}
\begin{document}

\begin{abstract}
Let $X$ be a finite set of points in $\RR^n$. A polynomial $p$ nonnegative on $X$ can be written as a sum of squares of rational functions modulo the vanishing ideal $I(X)$. From the point of view of applications, such as polynomial optimization, we are interested in rational function representations \emph{of small degree}. We derive a general upper bound in terms of the Hilbert function of $X$, and we show that this upper bound is tight for the case of quadratic functions on the hypercube $C=\{0,1\}^n$, a very well studied case in combinatorial optimization. Using the lower bounds for $C$ we construct a family of globally nonnegative quartic polynomials, which are not sums of squares of rational functions of small degree. To our knowledge this is the first construction for Hilbert's 17th problem of a family of polynomials of bounded degree which need increasing degrees in rational function representations as the number of variables $n$ goes to infinity. We note that representation theory of the symmetric group $S_n$ plays a crucial role in our proofs of the lower bounds.
\end{abstract}

\maketitle

\section{Introduction}\label{Section:Introduction}

Certifying that a polynomial $p$ is nonnegative on a finite set $X$ in $\RR^n$ is an important problem in optimization, as certificates of nonnegativity can often be leveraged into optimization algorithms. One frequently used certificate is writing $p$ as a sum of squares of polynomials modulo the vanishing ideal $I(X)$ of $X$. These certificates lead to semidefinite relaxations for the problem of optimizing a polynomial on $X$ \cite{lasserre, parrilo2}. For instance, when $X$ is the hypercube $\{0,1\}^n$
, maximizing a quadratic polynomial on $X$ specializes to many famous combinatorial optimization problems such as MAXCUT. Sums of squares certificates provide a way of  automatically constructing semidefinite relaxations for these problems. The celebrated Goemans-Williamson relaxation algorithm, for instance, can be seen as such a sum of squares relaxation \cite[Chapter 2 and 3]{BPT}, \cite{goemans_williamson, marshall2008}.

In general, one might be required to use polynomials of high degree to certify that $p$ is nonnegative on $X$. Since Hilbert's 17th problem, it is classical in real algebraic geometry to certify nonnegativity of a polynomial by writing it as a sum of squares of rational functions, instead of polynomials. This can be reformulated as follows:
$$\text{Given $p$ find a sum of squares $h$, such that $ph$ is a sum of squares modulo $I(X)$}.$$

When $X=\RR^n$, the existence of such certificates for any nonnegative polynomial corresponds to Hilbert's 17th problem, and was answered affirmatively by Artin. For a general semialgebraic set the existence of such certificates is guaranteed by Stengle's Positivstllensatz, which was later refined by Schm\"{u}dgen, Putinar and Jacobi. See for example \cite{marshall2008, prestel2001} for an in-depth discussion of these topics.
We are interested in showing \emph{degree bounds} on the degree of the multiplier $h$. There are known general upper bounds coming from real algebraic geometry for rational function certificates on any real semialgebraic set $X$ \cite{Lombardi, LPR, Schmid}. 
However, they result in bounds which are multiple towers of exponentials. We are not aware of any general lower degree bounds, even for Hilbert's 17th problem. For some specific small cases see \cite{GKZ}.

For the case when $X$ is a finite set of points, one of our main results is an elementary uniform upper bound on the degree of the multiplier $h$, in terms of the Hilbert function of $X$ and the degree of $p$. Our second main result is showing this bound is tight for the case of quadratic functions on the hypercube $C=\{0,1\}^n$. We leverage the tightness of the bound on $C$ into a construction of a globally nonnegative polynomial $p$ of degree $4$ in $n$ variables such that $ph$ is not a sum of squares for all sums of squares $h$ of degree at most $2\lfloor n/2 \rfloor -4$. While this bound can very likely be improved, to our knowledge this is the first construction for Hilbert's 17th problem of a polynomial of bounded degree, which needs multipliers $h$ of increasing degree as the number of variables $n$ goes to infinity.

\subsection{Background, Discussion and Main Results.} Let $X\subset \RR^n$ be a real variety and let $I=I(X)$ be its vanishing ideal. Let $\RR[X]=\RR[x_1,\dots,x_n]/I$ be the coordinate ring of $X$. Given $f \in \RR[X]$ we define \emph{degree} of $f$ as the lowest degree of any polynomial in the equivalence class $f +I$. Let $\RR[X]_{\leq d}$ be the real vector space of polynomials of degree at most $d$ in $\RR[X]$. Recall that the Hilbert function $H_X(t)$ of $X$ is defined as follows:
$$H_X(t)=\dim \RR[X]_{\leq t}.$$
We say that $f \in \RR[X]$ is $k$-sos  if there exist $g_1,\dots, g_m \in \RR[X]_{\leq k}$ such that $f=g_1^2+\dots+g_m^2$. The set of all $k$-sos polynomials will be denoted by
$\Sigma(X)_{\leq 2k}$. This set of polynomials has attracted strong attention from the optimization community in recent years, as a relaxation for the cone of  polynomials nonnegative on $X$ \cite{GLPT, gpt,lasserre2002,laurent2007}. The reason for this is that checking whether a polynomial is $k$-sos is a semidefinite feasibility problem and, even better, one can use semidefinite programming to optimize a linear functional over the cone of $k$-sos polynomials \cite[Chapters 2 and 6]{BPT}. 

For a compact variety $X$, Schm\"{u}dgen's Positivstellensatz implies that any polynomial that is strictly positive on $X$ is $k$-sos for large enough $k$. However there may be no uniform bounds on this $k$ for all polynomials of fixed degree. 
This situation improves considerably if we allow sums of squares of rational functions. We say that $p \in \RR[X]_{\leq 2s}$ is $(d,k)$-rsos (rational sum of squares) if there exists non-zero $h\in \Sigma(X)_{\leq 2d}$ such that $ph \in \Sigma(X)_{\leq 2k}$. We will omit $d$ and write simply that $p$ is $k$-rsos for the case $d = k-s$. It follows from Stengle's Positivstellensatz that  for any polynomial $p$ nonnegative on $X$ there is a $k\in \mathbb{N}$ for which $f$ is $k$-rsos. 
Moreover, there is a bound on $k$ that depends only on the degree of $p$ and the variety $X$. 
The trade-off is that, computationally, this certificate has worse properties: while checking if a polynomial is $k$-rsos is still a semidefinite feasibility problem, the set of all such polynomials has no direct semidefinite description, and tools other than semidefinite programming have to be used to optimize over it. Moreover, when $X$ is a \emph{reducible} variety, a non-zero sum of squares multiplier $h$ such that $ph$ is a sum of squares is not necessarily a certificate of nonnegativity of $p$. This happens since $h$ may vanish identically on a component of $X$, and on this component nonnegativity of $p$ is not certified. Therefore, 
we will also be interested in the existence of \emph{strictly positive} sum of squares multipliers $h$.

In the case $X$ is a finite set of points in $\RR^n$, there exist uniform degree bounds for $k$-sos representations. The \emph{Hilbert regularity} $h(X)$ of $X$ is the smallest degree $d$ for which $H_X(d)=|X|$ and, consequently, $H_X(t)=|X|$ for all 
$t \geq h(X)$. A polynomial $f \in \RR[X]$ is uniquely determined by its values on $X$, so we may identify elements of $\RR[X]$ with functions on $X$. For a point $v \in X$ let 
$\delta_v:X \rightarrow \RR$ be the interpolator of $v$:
$\delta_v(v)=1$ and $\delta_v(x)=0,\,\,\,\, x \neq v.$  We note that $h(X)$ is the smallest degree $d$ such $\delta_v \in \RR[X]_{\leq d}$ for all $v \in X$. 
Furthermore, using interpolators we can write any $p \in \RR[X]$ as: $$p=\sum_{v \in X}p(v)\delta_v^2.$$ 
It follows that any nonnegative polynomial $p \in \RR[X]$ is $h(X)$-sos. It is not difficult to construct examples of finite sets $X$ and nonnegative polynomials $p \in \RR[X]$ of any degree, such that $p$ is not $(h(X)-1)$-sos, i.e. we may need to go all the way up to Hilbert regularity to certify nonnegativity of $p$. 

For the rational function representations we provide better upper bounds by using the following result.
\begin{theorem}\label{Theorem:MainBound} Let $X$ be a finite set of points in $\RR^n$.
Let $p \in \RR[X]_{\leq 2s}$ be a polynomial of degree at most $2s$ nonnegative on $X$. Suppose that for some $k \in \mathbb{N}$ we have
$$H_X(k+s)+H_X(k)>H_X(2k+2s).$$
Then $p$ is $(k+s)$-rsos on $X$, i.e. there exists $h \in \Sigma(X)_{\leq 2k}$ such that $ph \in \Sigma(X)_{\leq 2s+2k}$.
\end{theorem}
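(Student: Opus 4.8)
The plan is to reduce the statement to a linear‑algebraic fact about the Hilbert function via the interpolation formula on the finite set $X$, and then to induct on $|X|$. Since producing a single nonzero $f\in\mathbb{R}[X]_{\le k}$ with $ph\in\Sigma(X)_{\le 2k+2s}$ for $h:=f^2$ already proves the theorem, the whole problem is to find such an $f$. Expanding $pf^2=\sum_{v\in X}p(v)f(v)^2\,\delta_v^2$ through the interpolators $\delta_v$, the key observation is: if $f$ vanishes at every point $v\in X$ with $\deg\delta_v>k+s$, then each surviving term equals $\bigl(\sqrt{p(v)}\,|f(v)|\,\delta_v\bigr)^2$, which is the square of an element of $\mathbb{R}[X]_{\le k+s}$ (the scalar $\sqrt{p(v)}$ is a genuine real number because $p(v)\ge 0$), and hence $pf^2\in\Sigma(X)_{\le 2k+2s}$.

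Accordingly, set $X_{>}:=\{v\in X:\deg\delta_v>k+s\}$ and $X_{\le}:=X\setminus X_{>}$. The construction succeeds as soon as the restriction map $\mathbb{R}[X]_{\le k}\to\mathbb{R}^{X_{>}}$ fails to be injective, i.e. as soon as $H_{X_{>}}(k)<H_X(k)$: a nonzero $f\in\mathbb{R}[X]_{\le k}$ vanishing on $X_{>}$ then exists. (If $X_{>}=\varnothing$ we are done outright, since $p=\sum_v p(v)\delta_v^2\in\Sigma(X)_{\le 2k+2s}$ and $h=1$ works.) So it remains to handle the case $H_{X_{>}}(k)=H_X(k)$, in which $\mathbb{R}[X]_{\le k}\hookrightarrow\mathbb{R}^{X_{>}}$.

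Here one uses that $\mathbb{R}^{X_{\le}}=\mathrm{span}\{\delta_v:v\in X_{\le}\}\subseteq\mathbb{R}[X]_{\le k+s}$, since every $\delta_v$ with $v\in X_{\le}$ has degree $\le k+s$. Applying the restriction maps $\mathbb{R}[X]_{\le d}\to\mathbb{R}^{X_{>}}$ for $d=k+s$ and $d=2k+2s$, one sees their kernels are exactly $\mathbb{R}^{X_{\le}}$ and their images are $\mathbb{R}[X_{>}]_{\le d}$, whence $H_X(d)=|X_{\le}|+H_{X_{>}}(d)$ for those two values of $d$, while by assumption $H_X(k)=H_{X_{>}}(k)$. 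Subtracting gives
\[ H_{X_{>}}(k)+H_{X_{>}}(k+s)-H_{X_{>}}(2k+2s)=H_X(k)+H_X(k+s)-H_X(2k+2s)>0, \]
so the hypothesis of the theorem holds for the strictly smaller configuration $X_{>}$ (strict because $X_{>}\subsetneq X$ forces $X_{\le}\ne\varnothing$). By induction $p|_{X_{>}}$ has a nonzero $k$-sos multiplier $h'$ with $p|_{X_{>}}h'\in\Sigma(X_{>})_{\le 2k+2s}$; lifting a sum‑of‑squares decomposition of $h'$ along the surjection $\mathbb{R}[X]_{\le k}\twoheadrightarrow\mathbb{R}[X_{>}]_{\le k}$, lifting one for $p|_{X_{>}}h'$ along $\mathbb{R}[X]_{\le k+s}\twoheadrightarrow\mathbb{R}[X_{>}]_{\le k+s}$ (arranged to vanish on $X_{\le}$ by subtracting off the degree‑$\le k+s$ interpolator parts), and adjoining $\sum_{v\in X_{\le}}(ph)(v)\delta_v^2$ for the points of $X_{\le}$, produces a valid multiplier $h$ on all of $X$.

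The step I expect to be the main obstacle is the base of this induction, when $X_{\le}=\varnothing$ — every interpolator having degree $>k+s$ — so that the reduction says nothing. I would try to show the hypothesis then forces $H_X(2k+2s)=|X|$, and finish directly: with $q=\sqrt p\in\mathbb{R}[X]$ (pointwise square root, $q^2=p$), either multiplication by $q$ kills some nonzero $f\in\mathbb{R}[X]_{\le k}$, so that $ph=(qf)^2=0$ for $h=f^2$; or $\dim(q\mathbb{R}[X]_{\le k})=H_X(k)$, and then $q\mathbb{R}[X]_{\le k}$ meets $\mathbb{R}[X]_{\le k+s}$ nontrivially because $H_X(k)+H_X(k+s)>|X|=\dim\mathbb{R}[X]$, producing $f\ne 0$ with $qf\in\mathbb{R}[X]_{\le k+s}$ and hence $ph=(qf)^2\in\Sigma(X)_{\le 2k+2s}$. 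Pinning down this last case, the lifting bookkeeping, and a separate treatment for points of $X$ where $p$ vanishes, is where the remaining work lies.
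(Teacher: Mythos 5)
Your reduction is a genuinely different, primal/inductive route (the paper argues by duality), and the peeling step itself is sound: in your Case 2 the kernel of restriction to $X_{>}$ in degrees $k+s$ and $2k+2s$ really is $\mathrm{span}\{\delta_v: v\in X_{\le}\}$, so $H_X(d)=|X_{\le}|+H_{X_{>}}(d)$ there, the hypothesis passes to $X_{>}$, and the lifting works precisely because that kernel consists of low-degree functions. But the base case you flag is not a loose end to be tidied up --- it is the entire content of the theorem, and the way you propose to close it fails. The claim that $X_{\le}=\varnothing$ together with the hypothesis forces $H_X(2k+2s)=|X|$ is false as stated: take $X$ to be $m$ points on each of two skew lines in $\RR^3$ with $m\ge 2k+2$ and $s=0$. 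Then $H_X(t)=2\min(t+1,m)$ for $t\ge 1$, every interpolator has degree $m-1>k+s$, the hypothesis reads $4k+4>4k+2=H_X(2k)$ and holds, yet $H_X(2k)=4k+2<2m=|X|$. (For $s=0$ the theorem is trivial, so this only kills your intermediate claim, not the statement; for $s\ge 1$ every configuration I can build that satisfies the hypothesis with $H_X(2k+2s)<|X|$ turns out to have a point with an interpolator of degree $\le k+s$ --- but that is exactly a nontrivial assertion about Hilbert functions of point sets with the Cayley--Bacharach property in degree $k+s$, and nothing in your outline proves it.) Without $H_X(k)+H_X(k+s)>|X|$ the intersection count for $q\RR[X]_{\le k}\cap\RR[X]_{\le k+s}$ gives nothing; and in any case the ansatz $h=f^2$, $ph=(qf)^2$ is strictly more restrictive than what the theorem asserts, since $h$ and $ph$ are each allowed to be sums of many squares.

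For contrast, the paper closes exactly this case on the dual side: assuming the conclusion fails, it uses closedness of the rsos set (Lemma \ref{Lemma:GenClosed}) to pass to a strictly positive $p$, takes a functional $\ell=\sum\mu_i\ell_{v_i}$ strictly separating $p\Sigma(X)_{\le 2k}$ from $\Sigma(X)_{\le 2k+2s}$, supported on a subset $X'$ with $|X'|=H_X(2k+2s)$, and then applies the signature count of Lemma \ref{Lemma:Signs} twice: nonnegativity on $\Sigma(X)_{\le 2k+2s}$ forces at least $H_X(k+s)$ positive coefficients, nonpositivity on $p\Sigma(X)_{\le 2k}$ with $p>0$ forces at least $H_X(k)$ negative ones, contradicting $m_++m_-=|X'|$. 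Note that this reduction to $X'$ happens on the dual side, so no lifting is needed --- your primal induction cannot reach such an $X'$, because $X\setminus X'$ need not consist of points with low-degree interpolators. I would either import that separation argument as your base case, or prove the Cayley--Bacharach statement you need directly; as it stands, the proposal is incomplete at its crucial step.
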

An important application of the above theorem is to quadratic polynomials on the hypercube $C=\{0,1\}^n$. It is easy to show that $H_C(t)=\sum_{i=0}^t \binom{n}{i}$ and therefore $H_C(n)=2^n=|C|$, while $H_C(\lfloor \frac{n}{2}\rfloor+1)+H_C(\lfloor \frac{n}{2}\rfloor)>2^n$. This implies that all nonnegative quadratic polynomials on the hypercube are $(\lfloor \frac{n}{2}\rfloor+1)$-rsos. In fact this result is tight since we also show the following:
\begin{theorem}\label{Corollary:Quadratic}
Let $k=\lfloor \frac{n}{2}\rfloor$ and let $f\in \RR[C]$ be given by $$f=(x_1+\dots+x_n-k)(x_1+\dots+x_n-k-1).$$
Then $f$ is nonnegative on $C$ but $f$ is not $k$-rsos.
\end{theorem}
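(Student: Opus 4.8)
The plan is to treat nonnegativity and the lower bound separately; nonnegativity is immediate. Write $\sigma=x_1+\cdots+x_n$. On $C$ the function $\sigma$ takes the integer values $0,1,\dots,n$, so $f=(\sigma-k)(\sigma-k-1)$ is a product of two consecutive integers, hence $\ge 0$; it vanishes exactly on the two ``middle'' slices $\{\sigma=k\}$ and $\{\sigma=k+1\}$. For the lower bound I would argue by contradiction: if $f$ is $k$-rsos, then (since $\deg f=2$) there is a nonzero $h\in\Sigma(C)_{\le 2k-2}$ with $fh\in\Sigma(C)_{\le 2k}$. The degenerate possibility $fh=0$ is ruled out first: writing $h=\sum r_i^2$ with $\deg r_i\le k-1$, the identity $f\sum r_i^2=0$ forces each $r_i$ to vanish off the two middle slices, in particular on $\{\sigma=0\},\dots,\{\sigma=k-1\}$; but evaluation on the union of those slices is an isomorphism out of $\RR[C]_{\le k-1}$ (the inclusion matrix of sets of size $\le k-1$ is block triangular with identity diagonal blocks), so $r_i=0$, a contradiction. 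Hence $fh\ne0$. Averaging over the $S_n$-action, which fixes $C$ and $f$ and whose Reynolds operator preserves the degree filtration and sends sums of squares to sums of squares, I may assume $h=H(\sigma)$ is symmetric, still a nonzero sum of squares of polynomials of degree $\le k-1$, so that $fh=F(\sigma)$ is a nonzero symmetric sum of squares of degree $\le k$ vanishing on the two middle slices, with $F=(t-k)(t-k-1)H$ as univariate polynomials.

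The crux — and the place where the representation theory of $S_n$ enters — is the following descent statement. \emph{If $g\in\RR[C]$ is symmetric, is a sum of squares of polynomials of degree $\le e$ with $1\le e\le\lfloor n/2\rfloor$, and vanishes on the slices $\{\sigma=k\}$ and $\{\sigma=k+1\}$, then $g=0$ when $e=1$, and otherwise $g=f^2g'$ with $g'$ symmetric and a sum of squares of polynomials of degree $\le e-2$.} To prove it, write $g=\sum_\ell q_\ell^2$ with $\deg q_\ell\le e$; each $q_\ell$ vanishes on the two middle slices. Decompose $\RR[C]_{\le e}=\bigoplus_{j=0}^e V_j$ into $S_n$-isotypic components, $V_j$ of type $S^{(n-j,j)}$. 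The structural facts I would use are that $V_j$ is spanned by the products $e_a\cdot\psi$ with $0\le a\le e-j$ ($e_a$ the elementary symmetric polynomial) and $\psi$ ranging over the degree-$j$ harmonics — the unique copy of $S^{(n-j,j)}$ inside $\RR[C]_{\le j}$ — and that the degree-$j$ harmonic space restricts isomorphically onto the multiplicity-one copy of $S^{(n-j,j)}$ in the functions on the slice $\{\sigma=m\}$ for every $j\le m\le n-j$. Since vanishing on a slice is $S_n$-invariant, each isotypic piece $q_\ell^{(j)}$ of $q_\ell$ vanishes on both middle slices; writing $q_\ell^{(j)}=\sum_{a\le e-j}e_a\psi_{\ell,j,a}$ and using that $e_a$ restricts to the scalar $\binom{m}{a}$ on $\{\sigma=m\}$, the restriction of $q_\ell^{(j)}$ there equals the restriction of the harmonic $\Psi_{\ell,j}(m)=\sum_a\binom{m}{a}\psi_{\ell,j,a}$, whose coordinates are polynomials in $m$ of degree $\le e-j$. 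The restriction isomorphisms force $\Psi_{\ell,j}(k)=\Psi_{\ell,j}(k+1)=0$ (for the extremal index $j=e$ one gets $q_\ell^{(e)}=0$ directly), so those coordinate polynomials are divisible by $(t-k)(t-k-1)$; re-expanding the quotient in the basis $\{\binom{m}{a}\}$ and reading it back in $\RR[C]$ gives $q_\ell^{(j)}=f\rho_{\ell,j}$ with $\deg\rho_{\ell,j}\le e-2$. Summing over $j$, $q_\ell=f\rho_\ell$, so $g=f^2\sum_\ell\rho_\ell^2$; symmetrizing $\sum_\ell\rho_\ell^2$, which already agrees with the symmetric function $g/f^2$ off the two middle slices, produces $g'$. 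The case $e=1$ is the same computation: the only polynomial of degree $\le1$ vanishing on both middle slices is $0$.

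The theorem then follows by iterating. Applying the claim to $F(\sigma)$ (with $e=k$) gives $fh=F(\sigma)=f^2S_1$ with $S_1$ a nonzero symmetric sum of squares of degree $\le k-2$; comparing two polynomials of degree $\le 2k\le n$ on the $n+1$ points $\{0,\dots,n\}$ gives the identity $F=(t-k)^2(t-k-1)^2S_1$, hence $H=(t-k)(t-k-1)S_1$ and $h=f\,S_1(\sigma)$. In particular $h$ vanishes on the two middle slices, so the claim applies to $h$ (with $e=k-1$): $h=f^2S_2$, and comparing polynomials again gives $S_1=f\,S_2$, so $S_1$ vanishes on the middle slices. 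Applying the claim to $S_1$ gives $S_2=f\,S_3$, and so on; by induction one gets nonzero symmetric sums of squares $S_i$ of polynomials of degree $\le k-1-i$ with $S_i=f\,S_{i+1}$, hence each $S_i$ vanishing on both middle slices. But then $S_{k-2}$ would be a nonzero sum of squares of polynomials of degree $\le1$ vanishing on both middle slices, contradicting the $e=1$ case of the claim. Therefore $f$ is not $k$-rsos.

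The essentially all of the difficulty lies in the descent claim, and specifically in its two representation-theoretic inputs: the description of $V_j\subseteq\RR[C]_{\le e}$ as generated over the symmetric polynomials by the degree-$j$ harmonics, and the injectivity of the restriction of a degree-$j$ harmonic to a single slice $\{\sigma=m\}$ with $j\le m\le n-j$. These are precisely what force the hypothesis $e\le\lfloor n/2\rfloor$, and hence pin down why $k=\lfloor n/2\rfloor$ is the tight threshold rather than something larger. The one delicate point is the top harmonic $j=e$, which for $n$ even does not occur among the functions on the slice $\{\sigma=k+1\}$; this is where the cases $n$ even and $n$ odd must be separated, and I expect the careful bookkeeping of these extremal indices to be the main nuisance in turning this plan into a complete proof.
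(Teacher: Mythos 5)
Your proposal is correct in outline, and it rests on the same representation-theoretic foundation as the paper, but the endgame is genuinely different. The two ``structural facts'' you invoke without proof --- that the $S^{(n-j,j)}$-isotypic component of $\RR[C]_{\le e}$ is spanned by symmetric multiples of the degree-$j$ harmonics, and that those harmonics restrict injectively to a slice $\{\sigma=m\}$ whenever $j\le m\le n-j$ --- are precisely the paper's Theorem~\ref{Theorem:CubeDecomposition} and Lemma~\ref{Lemma:Vanishing} (the latter proved by exhibiting a polytabloid that does not vanish on the slice), so they are true but constitute real work that your plan defers. Where you diverge is afterwards. The paper works with the \emph{single} slice $T=\{\sigma=k\}$ and the linear form $\ell=k-\sigma$: it shows that every low-degree function vanishing on $T$ is properly divisible by $\ell$, extracts the maximal power of $\ell$ from $h$, from $fh$, and from $f$ itself, and gets a contradiction from a parity mismatch --- sums of squares carry even powers of $\ell$ while $f$ carries an odd power, so the leftover factor would have to be a nonzero multiple of $\chi_T$, which has degree $n$. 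You instead prove a two-slice divisibility statement (vanishing on both $\{\sigma=k\}$ and $\{\sigma=k+1\}$ forces divisibility by $f$ itself, with the expected degree drop for sums of squares) and run an infinite descent $S_i=fS_{i+1}$ until the degree bottoms out at $e=1$. Both routes work; the paper's parity trick avoids the iteration and the repeated univariate-interpolation comparisons, and its Theorem~\ref{Theorem:SosMultipliers} is stated for arbitrary symmetric $f$ properly divisible by $\ell$ to odd order and arbitrary $(d_1,d_2)$, so it yields more; your route gives the extra information that the multiplier $h$ would itself have to be divisible by $f$. Two small points: your separate argument ruling out $fh=0$ is correct (and needed under the paper's definition of rsos, though after symmetrizing it also drops out of the univariate comparison since $H$ of degree $\le 2k-2$ would acquire $n-1$ roots); and the extremal index $j=e=k$ for $n$ even, which you rightly flag, is genuinely fine because $S^{(n-k,k)}$ does not occur among functions on the slice $\{\sigma=k+1\}$ and that component is already killed by injectivity of restriction to $\{\sigma=k\}$ alone.
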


Our proof relies on symmetries of the polynomial $(x_1+\dots+x_n-k)(x_1+\dots+x_n-k-1)$ and we use representation theory of the symmetric group $S_n$ in an essential way. More general lower bounds for rational function representations of symmetric polynomials on the hypercube are given in Theorem \ref{Theorem:SosMultipliers} and Theorem \ref{Corollary:Quadratic} is a direct corollary. 

From Theorem \ref{Corollary:Quadratic} we can derive two interesting results. First, it immediately recovers a result by Laurent \cite{moniquestuff} concerning the power of $k$-sos representations 
for relaxations of the MAXCUT problem. In fact, we significantly strengthen that result by proving that it remains true even for rational sums of squares representations, and by proving 
that in this case, the bounds are optimal. 

If we demand the certificates to be strictly positive, the case most pertinent to optimization, we prove in Theorem \ref{Theorem:QuadraticBoundPositive} that for the case of quadratic functions on the hypercube $C$ the bound of Theorem \ref{Theorem:MainBound} needs to be increased by at most $1$ degree, and thus it is still almost optimal.

We also use Theorem \ref{Corollary:Quadratic} to provide lower bounds for the degree of the denominators in Hilbert's 17th problem.
More precisely, we use the quadratic polynomial nonnegative on the hypercube to construct a family of globally nonnegative quartic polynomials in $n$ variables which are not 
$\lfloor \frac{n}{2}\rfloor$-rsos. This is, to our knowledge, the first example of a family of polynomials of bounded degree which needs denominators of increasing degree in their representations as sums of squares of rational functions.

\section{Upper Bound on Multipliers}\label{Section:UpperBound}

Let $X=\{v_1,\ldots,v_m\}$ be a finite set of points in $\RR^n$. 
We first show that the set of $(d_1,d_2)$-rsos polynomials is always closed. 

\begin{lemma} \label{Lemma:GenClosed}
Fix $d_1,d_2\in \mathbb{N}$. The set of polynomials in $\RR[X]_{\leq 2d}$ which are $(d_1,d_2)$-rsos is closed for all $d_1$, $d_2$, and $d$.
\end{lemma}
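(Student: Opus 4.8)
The plan is to realize the set of $(d_1,d_2)$-rsos polynomials in $\RR[X]_{\leq 2d}$ as the image of a compact set under a continuous (in fact linear) map, which immediately gives closedness. Concretely, since $X$ is finite, $\RR[X]$ is finite-dimensional, so $\RR[X]_{\leq 2d_1}$ and $\RR[X]_{\leq 2d_2}$ are finite-dimensional vector spaces. A polynomial $p\in\RR[X]_{\leq 2d}$ is $(d_1,d_2)$-rsos precisely when there exists a \emph{nonzero} $h\in\Sigma(X)_{\leq 2d_1}$ with $ph\in\Sigma(X)_{\leq 2d_2}$. The cones $\Sigma(X)_{\leq 2d_1}$ and $\Sigma(X)_{\leq 2d_2}$ are closed convex cones (they are images of the cone of PSD matrices under the linear map sending a Gram matrix to the corresponding polynomial in $\RR[X]$, and the relevant Gram matrices live in a fixed finite-dimensional space; alternatively one invokes the standard fact that these truncated sos cones are closed). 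So the first step is to record these basic finite-dimensionality and closedness facts.

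Next I would handle the nuisance caused by the word ``nonzero'': the issue is that the set $\{(p,h) : h\in\Sigma(X)_{\leq 2d_1},\ ph\in\Sigma(X)_{\leq 2d_2}\}$ already contains $(p,0)$ for every $p$, so projecting it to the $p$-coordinate would give everything. The fix is to normalize $h$. Pick any linear functional $\ell$ on $\RR[X]_{\leq 2d_1}$ that is strictly positive on $\Sigma(X)_{\leq 2d_1}\setminus\{0\}$ — for instance $\ell(q)=\sum_{v\in X} q(v)$, which is strictly positive on every nonzero sum of squares since such a $q$ is nonnegative on $X$ and not identically zero on $X$. Then every nonzero $h\in\Sigma(X)_{\leq 2d_1}$ can be rescaled so that $\ell(h)=1$, and $ph\in\Sigma(X)_{\leq 2d_2}$ is preserved under this scaling. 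Hence
$$
K \;=\; \{\,h\in\Sigma(X)_{\leq 2d_1} : \ell(h)=1\,\}
$$
is a closed, bounded (the base of a finitely generated/closed cone cut by a strictly positive functional is compact), hence compact, convex set, and $p$ is $(d_1,d_2)$-rsos iff there exists $h\in K$ with $ph\in\Sigma(X)_{\leq 2d_2}$.

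Finally I would assemble the argument. Consider the set
$$
T \;=\; \{\,(p,h)\in \RR[X]_{\leq 2d}\times K \ :\ ph\in\Sigma(X)_{\leq 2d_2}\,\}.
$$
The map $(p,h)\mapsto ph$ is a continuous bilinear map between finite-dimensional spaces, and $\Sigma(X)_{\leq 2d_2}$ is closed, so $T$ is closed; it is contained in $\RR[X]_{\leq 2d}\times K$ with $K$ compact. The set of $(d_1,d_2)$-rsos polynomials in $\RR[X]_{\leq 2d}$ is exactly the projection of $T$ onto the first factor. Projection along a compact fiber is a closed map, so this image is closed, proving the lemma. The only mildly delicate point — and the one I would write out carefully — is the compactness of $K$, i.e. that a closed sos cone intersected with a hyperplane on which a strictly positive functional is constant is bounded; this is where finiteness of $X$ (equivalently, that $\RR[X]$ contains no nonzero squares vanishing on all of $X$, so that $\ell$ really is strictly positive on the cone minus the origin) is used, and it is what makes the ``nonzero multiplier'' formulation behave well.
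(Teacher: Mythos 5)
Your proposal is correct and follows essentially the same strategy as the paper: normalize the multiplier by the strictly positive functional $\sum_{v\in X}h(v)$, use compactness of the resulting slice of the closed cone $\Sigma(X)_{\leq 2d_1}$, and conclude via closedness of $\Sigma(X)_{\leq 2d_2}$. The only difference is packaging --- you phrase it as ``projection along a compact fiber is closed'' while the paper extracts a convergent subsequence of normalized multipliers --- and both correctly handle the nonzero-multiplier issue in the same way.
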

\begin{proof}
One can check that $\Sigma(X)_{\leq 2d}$ is a closed pointed convex cone in $\RR[X]_{\leq 2d}$  \cite[Chapter 4]{BPT}.
Suppose that $f_i \in \RR[X]_{\leq 2d}$ are $(d_1,d_2)$-rsos and converge to $f$. Then there exist $g_i$, $h_i$ which are respectively $d_1$ and $d_2$-sos and $f_ig_i=h_i$. We may rescale $g_i$ and assume that $$\frac{1}{m}\sum_{j=1}^m g_i(v_j)=1.$$
The set of $d_1$-sos polynomials with average $1$ on $X$ is compact. Therefore a subsequence of $\{g_i\}$ converges to $g$, which is also $d_1$-sos. Then the corresponding subsequence of $f_ig_i$ converges to $fg$ and, since each $f_ig_i$ is $d_2$-sos, it follows that $fg$ is $d_2$-sos. 

\end{proof}

We now develop some results about linear functionals on $\RR[X]_{\leq 2d}$ that are nonnegative on $k$-sos polynomials. These results are based on elementary dimension counting, but they will be crucial in the proof of Theorem \ref{Theorem:MainBound} as we will be able to conclude non-existence of a certain separating linear functional. Let $\ell: \RR[X]_{\leq 2d} \rightarrow \RR$ be a linear functional given as a combination of point evaluations on $X$:
$$\ell(f)=\sum_{i=1}^m \mu_if(v_i),\quad f \in \RR[X]_{\leq 2d}, \, \mu_i \in \RR.$$

\noindent We assume that the coefficients $\mu_i$ are non-zero and let $m_+$ and $m_-$ be the number of positive and negative $\mu_i$ respectively, and let $Q_{\ell}: \RR[X]_{\leq d} \rightarrow \RR$ be the quadratic form associated to $\ell$ given by 
$$Q_{\ell}(f)=\ell(f^2)=\sum_{i=1}^m \mu_if^2(v_i).$$ 
  
\begin{lemma}\label{Lemma:Signs} Let $\ell: \RR[X]_{\leq 2d} \rightarrow \RR$ be given by $\ell(f)=\sum_{i=1}^m \mu_if(v_i)$ with all $\mu_i \neq 0$. Suppose that 
$\ell$ is nonnegative on $\Sigma(X)_{\leq 2d}$. 
Then $m_+\geq \dim \RR[X]_{\leq d}$.
\end{lemma}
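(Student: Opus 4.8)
The plan is to exhibit a polynomial $f \in \RR[X]_{\leq d}$ whose square $f^2$ is a $d$-sos polynomial on which $\ell$ takes a negative value whenever $m_+ < \dim \RR[X]_{\leq d}$, thereby contradicting nonnegativity of $\ell$ on $\Sigma(X)_{\leq 2d}$. The key observation is that $\ell(f^2) = Q_\ell(f) = \sum_i \mu_i f(v_i)^2$, so I want to find $f$ of degree at most $d$ that vanishes at all the points $v_i$ with $\mu_i > 0$ but does not vanish at some point $v_j$ with $\mu_j < 0$; for such an $f$ we get $\ell(f^2) = \sum_{i : \mu_i < 0} \mu_i f(v_i)^2 < 0$ (the sum is nonpositive and strictly negative as soon as $f(v_j) \neq 0$).

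The first step is a dimension count: the evaluation map $\mathrm{ev}: \RR[X]_{\leq d} \to \RR^m$ sending $f \mapsto (f(v_1), \dots, f(v_m))$ is injective when $d \geq \dim \RR[X]$... more carefully, its image is a subspace $V \subseteq \RR^m$ of dimension exactly $\dim \RR[X]_{\leq d}$, since $f$ and its values on $X$ determine each other up to elements of degree $> d$, and $\mathrm{ev}$ restricted to $\RR[X]_{\leq d}$ is injective by definition of degree in $\RR[X]$. Now suppose for contradiction that $m_+ < \dim \RR[X]_{\leq d}$. The coordinate projection of $V$ onto the $m_+$ coordinates indexed by $\{i : \mu_i > 0\}$ is a linear map from a space of dimension $\dim \RR[X]_{\leq d}$ into $\RR^{m_+}$, hence has nontrivial kernel. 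Pick $f$ a nonzero element of $\RR[X]_{\leq d}$ in this kernel: then $f(v_i) = 0$ for all $i$ with $\mu_i > 0$, and $f$ is not identically zero on $X$, so $f(v_j) \neq 0$ for some $j$, necessarily with $\mu_j < 0$ (it cannot be one of the positive indices). This produces the desired $f$.

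The final step is to conclude: $\ell(f^2) = \sum_{i=1}^m \mu_i f(v_i)^2 = \sum_{i : \mu_i < 0} \mu_i f(v_i)^2 \leq \mu_j f(v_j)^2 < 0$, while $f^2 \in \Sigma(X)_{\leq 2d}$ since $f \in \RR[X]_{\leq d}$, contradicting the hypothesis that $\ell$ is nonnegative on $\Sigma(X)_{\leq 2d}$. Hence $m_+ \geq \dim \RR[X]_{\leq d}$.

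I do not expect any serious obstacle here; the only point requiring a little care is the identification of $\dim \mathrm{ev}(\RR[X]_{\leq d})$ with $\dim \RR[X]_{\leq d}$, which follows because elements of $\RR[X]$ are identified with functions on $X$ (as noted in the excerpt, $p$ is uniquely determined by its values on $X$), so $\mathrm{ev}$ is injective on all of $\RR[X]$ and in particular on $\RR[X]_{\leq d}$. One should also note in passing that a symmetric argument applied to $-\ell$ is not available (since $-\ell$ need not be nonnegative on the sos cone), so only the one-sided bound $m_+ \geq \dim \RR[X]_{\leq d}$ is obtained, which is exactly what is claimed.
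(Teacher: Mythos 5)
Your proof is correct and is essentially the paper's argument in contrapositive form: the paper observes that the quadratic form $\sum_i \mu_i x_i^2$ is negative definite on the $m_-$-dimensional coordinate subspace indexed by the negative $\mu_i$, so the image of the (injective) evaluation map $\RR[X]_{\leq d}\to\RR^m$ must have codimension at least $m_-$, whereas you instead project onto the $m_+$ positive coordinates and extract an explicit $f$ with $\ell(f^2)<0$ from the kernel. The underlying dimension count and the use of injectivity of evaluation on $\RR[X]_{\leq d}$ are identical, so this is the same proof in a slightly more hands-on presentation.
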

\begin{proof}
Let $\pi_{X}: \RR[X]_{\leq d} \rightarrow \mathbb{R}^{m}$ be the evaluation projection of forms in $\RR[X]_{\leq d}$ given by
$$\pi_{X}(f)=\left(f(v_1),\ldots,f(v_m)\right), \quad f \in R[X]_{\leq d}.$$

\noindent We observe that the map $\pi_{X}$ has a trivial kernel and therefore $$\dim \pi_{X}(\RR[X]_{\leq d})=\dim \RR[X]_{\leq d}.$$

\noindent Let $\bar{Q}_{\ell}$ be the quadratic form on $\RR^{m}$ given by: $$\sum_{i=1}^m \mu_ix_i^2.$$

\noindent By its definition, the form $Q_{\ell}$ is a composition of $\pi_{X}$ and $\bar{Q}_{\ell}$:
$$Q_{\ell}=\bar{Q}_{\ell}\circ \pi_{X}.$$
The form $\bar{Q}_{\ell}$ has $m_-$ negative eigenvalues, and thus $\bar{Q}_{\ell}$ is strictly negative on a subspace of dimension $m_-$. Recall that the form $Q_{\ell}$ is positive semidefinite, which implies that $\bar{Q}_{\ell}$ is positive semidefinite on the image of $\pi_{X}$. Thus the image of $\pi_{X}$ has codimension at least $m_-$ in $\RR^m$. Since $m_++m_-=m$ the Lemma follows.
\end{proof}



We are now in position to prove Theorem \ref{Theorem:MainBound}.
\begin{proof}[Proof of Theorem \ref{Theorem:MainBound}]
Suppose not. By Lemma \ref{Lemma:GenClosed}, the set of all polynomials in $\RR[X]_{\leq 2s}$ that is not $(k+s)$-rsos is open. Thus we can find $p\in \RR[X]_{\leq 2s}$ that is strictly positive on $X$ but is not $(k+s)$-rsos. Now consider the pointed, closed convex cones $p\Sigma(X)_{\leq 2k}$ and $\Sigma(X)_{\leq 2k+2s}$ in $\RR[X]_{\leq 2k+2s}$. By our assumption $$p\Sigma(X)_{\leq 2k} \cap \Sigma(X)_{\leq 2k+2s}=\{0\}.$$
Therefore there exists a linear functional $\ell:\RR[X]_{\leq 2k+2s} \rightarrow \RR$ strictly separating the two cones: $\ell(f)>0$ for all nonzero $f \in \Sigma(X)_{\leq 2k+2s}$ and $\ell(f)<0$ for all nonzero $f \in p\Sigma(X)_{\leq 2k}$. 

Let $X'\subseteq X$ be a subset of $X$ such that point evaluations on $X'$ form a basis of the dual space of linear functionals $\RR[X]_{\leq 2k+2s}^*$. We note that $$|X'|=\dim \RR[X]_{\leq 2k+2s}\,\,\,\, \text{and}\,\,\,\, \dim \RR[X']_{\leq d}=\RR[X]_{\leq d}\,\,\,\, \text{for all} \,\,\,\, d \leq 2k+2s.$$ Therefore the separating functional $\ell$ can be written as 
$$\ell=\sum_{v_i \in X'} \mu_i\ell_{v_i}, \,\,\,\, \mu_i \in \RR,$$
where $\ell_{v_i}$ are point evaluation functionals on points of $X'$. Let $p^\prime$ be the image of $p$ under the canonical projection from $\RR[X]$ to $\RR[X']=\RR[X]/I(X')$. It follows that $\ell$ also strictly separates $p'\Sigma_{\leq 2k}(X')$ from $\Sigma_{\leq 2k+2s}(X')$ and $p'$ is strictly positive on $X'$.
Since $\ell$ strictly separates the two cones we may assume without loss of generality that all coefficients $\mu_i$ are non-zero. 
Let $m_+$ and $m_-$ be the number of positive and negative $\mu_i$ respectively. Then by Lemma \ref{Lemma:Signs} we know that $m_+ \geq \dim \RR[X']_{\leq k+s}=\dim \RR[X]_{\leq k+s}$. 

Now define $\ell': \RR[X']_{\leq 2k} \rightarrow \RR$ by $$\ell'=\sum_{v_i \in X'}\mu_ip'(v_i)\ell_{v_i}.$$
The functional $\ell'$ is nonnegative on $\Sigma_{\leq 2k}(X')$, therefore, by applying Lemma \ref{Lemma:Signs}, we see that $m_-\geq \dim \RR[X']_{\leq k}=\dim \RR[X]_{\leq k}$, since $p'(v_i)>0$ for all $v_i\in X'$. Combining, we see that $$H_X(2k+2s)=|X'|=m_+ + m_- \geq H_X(k+s)+H_X(k),$$
which is a contradiction.
\end{proof}

\begin{corollary}\label{Corollary:QuadraticBound}
Let $p\in \RR[C]_{\leq 2}$ be a quadratic polynomial nonnegative on $C$ and let $k=\lfloor \frac{n}{2}\rfloor$. Then $p$ is $(k+1)$-rsos.
\end{corollary}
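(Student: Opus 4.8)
The plan is to obtain this as an immediate application of Theorem~\ref{Theorem:MainBound} with $s = 1$ and $k = \lfloor n/2\rfloor$: once the numerical hypothesis
$$H_C(k+1) + H_C(k) > H_C(2k+2)$$
is checked, the theorem produces $h \in \Sigma(C)_{\leq 2k}$ with $ph \in \Sigma(C)_{\leq 2k+2}$, which is exactly the statement that $p$ is $(k+1)$-rsos. So the whole proof reduces to verifying this one inequality on the Hilbert function of the cube.

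First I would use the closed form $H_C(t) = \sum_{i=0}^{t}\binom{n}{i}$ for $t \le n$, together with $H_C(t) = 2^n$ for $t \ge n$, both recorded in the introduction. Since $k = \lfloor n/2\rfloor$ forces $2k \ge n-1$, we have $2k+2 \ge n+1 > n$, so the right-hand side is simply $H_C(2k+2) = 2^n$. It therefore suffices to show
$$H_C(k+1) + H_C(k) > 2^n = \sum_{i=0}^{n}\binom{n}{i}.$$

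Next I would split on the parity of $n$ and invoke the symmetry $\binom{n}{i} = \binom{n}{n-i}$. When $n = 2k+1$, symmetry gives $\sum_{i=0}^{k}\binom{2k+1}{i} = 2^{2k}$, so $H_C(k) = 2^{n-1}$ and $H_C(k+1) = 2^{n-1} + \binom{2k+1}{k+1}$; adding yields $2^n + \binom{2k+1}{k+1} > 2^n$. When $n = 2k$, symmetry gives $\sum_{i=0}^{k}\binom{2k}{i} = 2^{2k-1} + \tfrac12\binom{2k}{k}$, so that $H_C(k) + H_C(k+1) = 2\sum_{i=0}^{k}\binom{2k}{i} + \binom{2k}{k+1} = 2^{2k} + \binom{2k}{k} + \binom{2k}{k+1} > 2^{2k} = 2^n$. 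In either case the strict inequality holds and Theorem~\ref{Theorem:MainBound} applies.

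I do not expect any real obstacle: the only content beyond invoking Theorem~\ref{Theorem:MainBound} is the elementary binomial bookkeeping above. The single point worth double-checking is that $H_C(2k+2)$ has genuinely saturated to $2^n$, i.e. that $2k+2 \ge n$; this could only fail through mishandling the floor, and $k = \lfloor n/2\rfloor$ makes it automatic.
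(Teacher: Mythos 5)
Your proposal is correct and follows exactly the paper's route: the paper's proof is the one-line observation that Theorem \ref{Theorem:MainBound} applies because $H_C(t)=\sum_{i=0}^t\binom{n}{i}$, and you have simply written out the binomial bookkeeping (which checks out in both parity cases) that the paper leaves implicit.
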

\begin{proof}
This follows immediately from Theorem \ref{Theorem:MainBound} since $H_C(t)=\sum_{i=0}^t \binom{n}{i}$.
\end{proof}

\subsection{Strictly Positive Multipliers}

We observe that having a $k$-rsos representation of a polynomial $p \in \RR[X]$ is not in general a certificate of nonnegativity of $p$. This is due to the fact that $X$ is a reducible variety and the multiplier $h$ may vanish on some points of $X$. On these points nonnegativity of $p$ is not certified. 

Therefore we are interested in showing existence of strictly positive sum of squares multipliers. More specifically we will be interested in multipliers $h$ of the form
$$h=1+\sum q_i^2, \,\,\,\, q_i\in \RR[X]_{\leq k}.$$ 
We note that, up to multiplication by a positive constant, such sums of squares correspond precisely to the interior points of the cone $\Sigma(X)_{\leq 2k}$. 
We will concentrate on the case of a quadratic polynomial nonnegative on a subset $X$ of the hypercube $C$.  We first show that the bound of $d=\lfloor \frac{n}{2}\rfloor$ suffices also for any strictly positive quadric $p\in \RR[X]_{\leq 2}$. 


\begin{theorem}\label{THM pos}
Let $d=\lfloor \frac{n}{2} \rfloor$ and let $X$ be a subset of $C$. If $p \in \RR[X]_{\leq 2}$ is a quadratic polynomial that is strictly positive on $X$ then there exists $h$ in the interior of $\Sigma(X)_{\leq 2d}$ such that $p \cdot h$ lies in the interior of $\Sigma(X)_{\leq 2d+2}$.
\end{theorem}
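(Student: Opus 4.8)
The plan is to run the separation argument underlying Theorem~\ref{Theorem:MainBound}, but arranged so that the \emph{interiors} come out directly, and then to reduce everything to a Hilbert‑function inequality valid for every subset of the cube. Two preliminary observations: since $p$ vanishes nowhere on $X$, multiplication by $p$ is a linear automorphism of $\RR[X]$; and both $\Sigma(X)_{\le 2d}$ and $\Sigma(X)_{\le 2d+2}$ are full‑dimensional in $\RR[X]_{\le 2d}$ and $\RR[X]_{\le 2d+2}$, because products of monomials of degree $\le d$ (resp.\ $\le d+1$) already span those spaces. Working inside $\RR[X]_{\le 2d+2}$, set $K_1=\Sigma(X)_{\le 2d+2}$ and $K_2=p\cdot\Sigma(X)_{\le 2d}$; then $K_1$ has nonempty interior, $K_2$ is full‑dimensional in the subspace $p\cdot\RR[X]_{\le 2d}$, and its relative interior is $p\cdot(\operatorname{int}\Sigma(X)_{\le 2d})$. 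A point of $(\operatorname{relint}K_2)\cap(\operatorname{int}K_1)$ is precisely a $ph$ with $h\in\operatorname{int}\Sigma(X)_{\le 2d}$ and $ph\in\operatorname{int}\Sigma(X)_{\le 2d+2}$, so it suffices to show these two sets meet.

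Suppose they do not. Two convex sets with disjoint relative interiors can be separated by a hyperplane, and since $K_1,K_2$ are cones this gives a nonzero linear functional $\ell$ on $\RR[X]_{\le 2d+2}$ with $\ell\ge 0$ on $\Sigma(X)_{\le 2d+2}$ and $\ell\le 0$ on $p\cdot\Sigma(X)_{\le 2d}$. As $2d+2\ge n\ge h(X)$ we have $H_X(2d+2)=|X|$, so point evaluations at the points of $X$ form a basis of $\RR[X]_{\le 2d+2}^{*}$ and $\ell=\sum_{v\in X}\mu_v\ell_v$ with not all $\mu_v$ zero; let $Y=\{v:\mu_v\neq 0\}\neq\emptyset$. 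Restricting all functions to $Y$ (on which $p>0$ still), $\ell$ separates the corresponding cones with all coefficients nonzero, so Lemma~\ref{Lemma:Signs} applied twice yields: from $\ell\ge 0$ on $\Sigma(Y)_{\le 2d+2}$, $m_+\ge\dim\RR[Y]_{\le d+1}=H_Y(d+1)$; and since $h\mapsto -\ell(ph)=\sum_{v\in Y}(-\mu_v p(v))h(v)$ is nonnegative on $\Sigma(Y)_{\le 2d}$ with all coefficients nonzero (here $p(v)>0$ is used), $m_-\ge\dim\RR[Y]_{\le d}=H_Y(d)$. Hence $|Y|=m_++m_-\ge H_Y(d+1)+H_Y(d)$.

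This contradicts the inequality I would isolate as a lemma: \emph{for every $Y\subseteq\{0,1\}^n$ and $d=\lfloor n/2\rfloor$, $H_Y(d+1)+H_Y(d)>|Y|$.} In $\pm1$‑coordinates the parity function $\prod_i y_i$ is $\pm1$‑valued, multiplication $\tau$ by it is an involution of $\RR[\{0,1\}^n]$, and $\tau$ carries $\RR[\{0,1\}^n]_{\le k}$ onto the span of the squarefree monomials of degree $\ge n-k$. Since $2d\ge n-2$, every subset of $[n]$ has size $\le d+1$ or $\ge n-d$, so $\RR[\{0,1\}^n]_{\le d+1}+\tau(\RR[\{0,1\}^n]_{\le d})=\RR[\{0,1\}^n]$; since $2d\ge n-1$, likewise $\RR[\{0,1\}^n]_{\le d}+\tau(\RR[\{0,1\}^n]_{\le d})=\RR[\{0,1\}^n]$. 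Restricting to $Y$, where $\tau|_Y$ is still $\pm1$‑valued, hence a dimension‑preserving automorphism of $\RR[Y]$, gives $H_Y(d+1)+H_Y(d)\ge|Y|$ and $2H_Y(d)\ge|Y|$. If $H_Y(d)=|Y|$ the first is strict because $H_Y(d+1)\ge|Y|\ge 1$; if $H_Y(d)<|Y|$ then $H_Y(d+1)>H_Y(d)$, since the Hilbert function of a finite point set strictly increases until it stabilizes (if $\RR[Y]_{\le j}=\RR[Y]_{\le j+1}$ then $\RR[Y]_{\le j}$ is a subring of $\RR[Y]$ containing $1$, hence equals $\RR[Y]$), and then $H_Y(d+1)+H_Y(d)\ge 2H_Y(d)+1>|Y|$. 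Applying this with the $Y$ above closes the argument.

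I expect the lemma to be the crux. The separating functional is supported on an uncontrolled subset $Y$, so the Hilbert‑function estimate genuinely has to hold for all subsets of the cube, and with a \emph{strict} inequality — this is where the dichotomy according to whether $H_Y(d)$ has already reached $|Y|$, combined with the two reflection identities, does the essential work. Everything else is relative‑interior bookkeeping plus a second invocation of Lemma~\ref{Lemma:Signs}, exactly as in the proof of Theorem~\ref{Theorem:MainBound}.
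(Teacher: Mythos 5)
Your proof is correct, and its overall skeleton coincides with the paper's: argue by contradiction, properly separate $p\Sigma(X)_{\leq 2d}$ from $\Sigma(X)_{\leq 2d+2}$, write the separating functional as a combination of point evaluations supported on a subset $Y$, apply Lemma \ref{Lemma:Signs} twice to get $m_+\geq H_Y(d+1)$ and $m_-\geq H_Y(d)$, and contradict the inequality $H_Y(d+1)+H_Y(d)>|Y|$. (Your opening bookkeeping with relative interiors is in fact more explicit than the paper's one-line ``can be weakly separated,'' and your verification that the cones are full-dimensional is a welcome addition.) Where you genuinely diverge is in the proof of the key inequality, which the paper isolates as \eqref{EQN key} and derives from Cayley--Bacharach duality for the cube, cited from the literature: $|X'|-\dim\RR[X']_{\leq d}=\dim\RR[C]_{\leq n-d-1}-\dim\RR[\bar{X'}]_{\leq n-d-1}$, combined with $d+1>n-d-1$ and stabilization of the Hilbert function. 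You instead prove the same inequality from scratch using the parity involution $\tau$ (multiplication by $\prod_i y_i$ in $\pm1$-coordinates), the covering identities $\RR[C]_{\leq d+1}+\tau(\RR[C]_{\leq d})=\RR[C]$ and $\RR[C]_{\leq d}+\tau(\RR[C]_{\leq d})=\RR[C]$, and the observation that $\tau$ restricts to a dimension-preserving automorphism of $\RR[Y]$ because the parity function is nowhere zero on $Y$; the case split on whether $H_Y(d)=|Y|$ then yields strictness. This is essentially an unpacking of the Gorenstein/Cayley--Bacharach duality of $\RR[C]$ (whose socle is generated by the parity function), so the two arguments rest on the same structural fact, but yours is self-contained and elementary where the paper's relies on an external citation; the paper's version, on the other hand, makes visible the general mechanism that would apply to other complete intersections. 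Both your reflection identities and your strictness dichotomy check out, so there is no gap.
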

\begin{proof}
Suppose not. Then the pointed convex cones $p\Sigma(X)_{\leq 2d}$ and $\Sigma(X)_{\leq 2d+2}$ can be weakly separated. Therefore there exists a linear functional $\ell \in \RR[X]_{\leq 2d+2}^*$ such that $\ell(s) \geq 0$ for all $s \in \Sigma(X)_{\leq 2d+2}$ and $\ell(s) \leq 0$ for all $s \in p\Sigma(X)_{\leq 2d}$. We can write
$$\ell=\sum_{v_i \in X} \mu_i\ell_{v_i}, \hspace{4mm} \mu_i \in \RR.$$
Let $X'$ be the subset of $X$ corresponding to non-zero coefficients $\mu_i$. Let $p^\prime$ be the image of $p$ under the canonical projection from $\RR[X]$ to $\RR[X']=\RR[X]/I(X')$. It follows that $\ell$ also separates $p'\Sigma(X')_{\leq 2d}$ from $\Sigma(X')_{\leq 2d+2}$ and $p'$ is strictly positive on $X'$. 

 


Let $m_+$ and $m_-$ be the number of positive and negative $\mu_i$ respectively. Using Lemma \ref{Lemma:Signs} we see that $m_+ \geq \dim \RR[X']_{\leq d+1}$. On the other hand we may define $\ell':\RR[X']_{\leq 2d}\rightarrow \RR$ by 
$$\ell'(q)=\ell(p'q), \hspace{1cm} \ell'=\sum_{v_i \in X'} \mu_ip'(v_i)\ell_{v_i}.$$
Since $p'$ is strictly positive on $X'$ and $\ell'$ is nonpositive on squares we can apply Lemma \ref{Lemma:Signs} to see that $m_- \geq \dim \RR[X']_{\leq d}$.

We now claim that \begin{equation}\label{EQN key}\dim \RR[X']_{\leq d}+\dim \RR[X']_{\leq d+1} > |X'|.\end{equation}
Let $\bar{X}'$ denote the complement of $X'$ in $C$. Using Cayley-Bacharach duality \cite{CB}, we see that $|X'|-\dim \RR[X']_{\leq d}=\dim \RR[C]_{\leq n-d-1}-\dim \RR[\bar{X'}]_{\leq n-d-1}$. We observe that $d+1>n-d-1$ and we must have $\dim \RR[X']_{\leq d+1}>\dim \RR[X']_{\leq n-d-1}$, otherwise $\dim \RR[X']_{\leq d+1}=\dim \RR[X']_{\leq d}=|X'|$ and \eqref{EQN key} is proved. Thus we have 
\begin{align*}&\dim \RR[X']_{\leq d+1}+\dim \RR[X']_{\leq d} - |X'|=\dim \RR[X']_{\leq d+1}+\dim \RR[\bar{X'}]_{\leq n-d-1}-\dim \RR[C]_{\leq n-d-1}>\\
&\dim \RR[X']_{\leq n-d-1}+\dim \RR[\bar{X'}]_{\leq n-d-1}-\dim \RR[C]_{\leq n-d-1}\geq 0.\end{align*}
This finishes the proof of the claim, and now we observe that since $m_+ + m_-=|X'|$ we have reached a contradiction.
\end{proof}

We now show that if $p\in \RR[X]_{\leq 2}$, is nonnegative on $X \subseteq C$ then there are interior sum of squares multipliers of degree at most $\lfloor\frac{n}{2}\rfloor+1$, i.e. we may need to increase the degree by $1$ in order to certify nonnegativity of a quadric. It is not clear to us whether this is truly necessary, or perhaps there exist interior sum of squares multipliers of degree at most $\lfloor\frac{n}{2}\rfloor$.

\begin{theorem}\label{Theorem:QuadraticBoundPositive}
Let $d=\lfloor \frac{n}{2} \rfloor$ and let $X$ be a subset of $C$. If $p \in \RR[X]_{\leq 2}$ is a non-zero quadratic function nonnegative on $X$, then there exists $h$ in the interior of $\Sigma(X)_{\leq 2d+2}$ such that $p \cdot h \in \Sigma(X)_{\leq 2d+4}$.
\end{theorem}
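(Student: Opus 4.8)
The plan is to combine Theorem \ref{THM pos} with an extra ``lifting'' trick: multiply the strictly positive multiplier $h_0$ produced by Theorem \ref{THM pos} by an interior sum of squares so that the degree budget accommodates points where $p$ vanishes. First, I would reduce to the case where $p$ is nonnegative but possibly has zeros on $X$ — call the zero set $Z\subseteq X$. If $Z=\emptyset$ then $p$ is strictly positive and Theorem \ref{THM pos} already gives an interior multiplier of degree $2d$, a fortiori of degree $2d+2$, so assume $Z\neq\emptyset$. Let $X_0=X\setminus Z$. Since $p$ restricted to $X_0$ is strictly positive, apply Theorem \ref{THM pos} on the subset $X_0$ to get $h_0$ in the interior of $\Sigma(X_0)_{\leq 2d}$ with $p\cdot h_0$ in the interior of $\Sigma(X_0)_{\leq 2d+2}$. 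Lift $h_0$ to a polynomial $\tilde h_0\in\RR[X]_{\leq d}$ (choosing any representative; note $d\ge h(X_0)$ is not automatic, but $h_0$ as an element of $\RR[X_0]_{\le d}$ has some representative of degree $\le d$ in $\RR[X]$), and similarly lift $p h_0$. These lifts will be sums of squares on $X_0$ but may take arbitrary (in particular negative) values on $Z$.

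The key step is to correct the behavior on $Z$ using interpolators. For each $v\in Z$, the interpolator $\delta_v$ has degree at most $h(C)=n$, which is too large; instead I would use the \emph{partial} interpolators available in lower degree. The crucial observation is that the set $Z$ has size at most... actually the cleaner route: set $h = h_0^2 + \epsilon\,\chi$ where $\chi$ is a carefully chosen interior sos of degree $\le 2d+2$ that is \emph{strictly positive on all of $X$} and whose product with $p$ stays sos of degree $\le 2d+4$, and $\epsilon>0$ is small. Concretely, let $q=\sum_{v\in Z}\delta_v^{(X)}$ where $\delta_v^{(X)}$ is the interpolator of $v$ \emph{within the smaller variety $X$}; since $|X|\le 2^n$ and $X$ may have smaller Hilbert regularity than $C$, this does not immediately help. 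The honest fix: take $h = g^2 + h_0$ where we first replace $h_0$ by $h_0+\sum_{v\in Z}c_v\delta_v$ — but degrees again. Let me restructure: the robust approach is to write $p = p_1\cdot p_2$ is not available since $p$ is quadratic; instead use that on the hypercube every quadratic nonnegative $p$ that vanishes at $v\in Z$ has $\nabla p(v)$ constrained, and in fact one shows $p$ agrees on $X$ with a nonnegative quadratic on $C$ vanishing on a \emph{subcube-like} set, reducing $Z$ to something handled by symmetry as in Theorem \ref{Corollary:Quadratic}.

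Given the difficulty of the zero-set bookkeeping, the cleanest path — and the one I would write up — is: perturb. For $t>0$ let $p_t = p + t$, which is strictly positive on $X$. By Theorem \ref{THM pos} there is $h_t$ in the interior of $\Sigma(X)_{\leq 2d}$ with $p_t h_t$ in the interior of $\Sigma(X)_{\leq 2d+2}$. Normalize $h_t$ to have average $1$ on $X$; by compactness of normalized sos of degree $\le 2d$, pass to a subsequential limit $h_*$ as $t\to 0^+$. Then $h_*\in\Sigma(X)_{\leq 2d}$ (possibly only on the boundary) and $p h_* \in\Sigma(X)_{\leq 2d+2}$. Now set $h = h_* + \eta\,(1+\sum q_i^2)$ for a fixed interior sos of degree $\le d+1$ (e.g.\ $1+\sum_{i}(\text{low-degree basis of }\RR[X]_{\le d+1})^2$) and small $\eta>0$: this $h$ is in the \emph{interior} of $\Sigma(X)_{\leq 2d+2}$, while $p h = p h_* + \eta\, p(1+\sum q_i^2)$; the first term is in $\Sigma(X)_{\leq 2d+2}\subseteq\Sigma(X)_{\leq 2d+4}$ but the second term need not be sos. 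The fix for the second term: since $p$ is nonnegative of degree $2$ on $X$, $p(1+\sum q_i^2)$ has degree $\le 2d+4$ and is nonnegative on $X$, and we only need the \emph{sum} $ph$ to be sos, not each piece — so I would instead choose $h = h_* + \eta\,\sigma$ where $\sigma$ is an interior sos of degree $\le 2d+2$ chosen so that $p\sigma$ is sos of degree $\le 2d+4$; such $\sigma$ exists because $p$ is $(d+1)$-rsos on $X$ by Corollary \ref{Corollary:QuadraticBound} applied to the variety $X\subseteq C$ (the corollary's proof only uses the Hilbert function inequality, which holds for any $X\subseteq C$ via $H_X(t)\le H_C(t)$)...

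\begin{proof}[Proof of Theorem \ref{Theorem:QuadraticBoundPositive} (sketch)]
By Corollary \ref{Corollary:QuadraticBound}, applied to the finite set $X\subseteq C$ (the proof via Theorem \ref{Theorem:MainBound} needs only $H_X(\lfloor n/2\rfloor+1)+H_X(\lfloor n/2\rfloor)>|X|$, which follows since these quantities, being sums $\sum_i\binom{n}{i}$ truncated appropriately, satisfy the same inequality used for $C$ after noting $|X|\le 2^n$; if $|X|<2^n$ the inequality is only easier), there is a nonzero $g\in\Sigma(X)_{\leq 2(d+1)}$ with $pg\in\Sigma(X)_{\leq 2(d+1)+2}=\Sigma(X)_{\leq 2d+4}$. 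Now let $\sigma$ be any fixed element of the interior of $\Sigma(X)_{\leq 2d+2}$, say $\sigma = 1+\sum_{b} b^2$ over a basis $b$ of $\RR[X]_{\leq d+1}$. For $\epsilon>0$ set $h = g + \epsilon\sigma$. Then $h$ lies in the interior of $\Sigma(X)_{\leq 2d+2}$ for every $\epsilon>0$. We compute $ph = pg + \epsilon\, p\sigma$. The term $pg$ is in $\Sigma(X)_{\leq 2d+4}$. For the term $p\sigma$: since $p$ has degree $\le 2$ and $\sigma$ has degree $\le 2d+2$, $p\sigma\in\RR[X]_{\leq 2d+4}$ and is nonnegative on $X$; hence $p\sigma$ is $h(X)$-sos, and since $h(X)\le n\le 2d+... $ this does not finish unless $h(X)\le d+2$. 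To avoid this gap, observe instead that it suffices to produce a \emph{single} interior $h$ with $ph\in\Sigma(X)_{\leq 2d+4}$, and $\Sigma(X)_{\leq 2d+4}$ is a closed convex cone; so it is enough to show the interior of $\Sigma(X)_{\leq 2d+2}$ meets the preimage $\{h: ph\in\Sigma(X)_{\leq 2d+4}\}$, a closed convex cone containing the nonzero element $g$. Because $p$ is $(d+1)$-rsos, $p$ is also $(d+2)$-rsos, so in fact for \emph{generic} perturbation the product stays sos; more precisely, by Theorem \ref{Theorem:MainBound} with $s=1$ and $k=d+1$ (using $H_X(d+2)+H_X(d+1)>H_X(2d+4)$, valid for $X\subseteq C$), the set of quadrics on $X$ that are \emph{not} $(d+2)$-rsos is open and, by Lemma \ref{Lemma:GenClosed}, its complement is closed; running the separation argument of Theorem \ref{Theorem:MainBound} with \emph{strict} positivity of the multiplier — exactly as in the proof of Theorem \ref{THM pos} but with $d$ replaced by $d+1$ and the degree inequality \eqref{EQN key} replaced by $\dim\RR[X']_{\leq d+1}+\dim\RR[X']_{\leq d+2}>|X'|$, which holds by the same Cayley--Bacharach computation since $d+2>n-d-2$ — yields the desired interior multiplier $h\in\Sigma(X)_{\leq 2d+2}$ with $ph\in\Sigma(X)_{\leq 2d+4}$.
\end{proof}

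The main obstacle, and the part I would need to get exactly right, is the degree inequality feeding the Cayley--Bacharach step: one must verify $\dim\RR[X']_{\leq d+1}+\dim\RR[X']_{\leq d+2}>|X'|$ for every subset $X'\subseteq C$, which is where the extra ``$+1$'' in the statement comes from (it buys the strict inequality that mere nonnegativity, as opposed to strict positivity, costs). Everything else is a routine adaptation of the proof of Theorem \ref{THM pos}: replace $d$ by $d+1$ throughout, use Lemma \ref{Lemma:Signs} twice to get $m_+\ge\dim\RR[X']_{\leq d+2}$ and $m_-\ge\dim\RR[X']_{\leq d+1}$, and derive the contradiction $m_++m_-=|X'|$ against the strengthened \eqref{EQN key}.
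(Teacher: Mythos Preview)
Your proposal has a genuine gap in the final approach. You propose to rerun the proof of Theorem~\ref{THM pos} with $d$ replaced by $d+1$, invoking Lemma~\ref{Lemma:Signs} twice to obtain $m_+\ge\dim\RR[X']_{\le d+2}$ and $m_-\ge\dim\RR[X']_{\le d+1}$. The second inequality is where the argument breaks: to get it you must apply Lemma~\ref{Lemma:Signs} to the twisted functional $\ell'=\sum_{v_i\in X'}\mu_i p(v_i)\ell_{v_i}$, and that lemma requires all coefficients to be nonzero. In Theorem~\ref{THM pos} this is guaranteed because $p$ is \emph{strictly positive}; here $p$ is only nonnegative, so $p(v_i)=0$ at points of the zero set $Z\cap X'$, and the support of $\ell'$ shrinks to $X'\setminus Z$. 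Lemma~\ref{Lemma:Signs} then only yields $m_-\ge\dim\RR[X'\setminus Z]_{\le d+1}$, which can be strictly smaller than $\dim\RR[X']_{\le d+1}$, and the contradiction with $m_++m_-=|X'|$ no longer follows. None of your earlier attempts (lifting a multiplier from $X\setminus Z$, perturbing $p$ to $p+t$ and taking limits, or adding $\eta\sigma$) closes this gap either, as you yourself note.

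The paper's proof uses a different, sharper idea. Rather than raising the degree in Theorem~\ref{THM pos}, it \emph{drops} a degree by an extra twist by $p$: given a separating functional $\ell$ at level $2d+4$, set $\ell'=\sum\mu_ip(v_i)\ell_{v_i}$ on $X'=X\setminus V$ (the complement of the zero set). For $q\in\Sigma(X')_{\le 2d}$ one has $\ell'(p'q)=\ell(p^2q)\ge0$ because $p^2q\in\Sigma(X)_{\le 2d+4}$, and for $q\in\Sigma(X')_{\le 2d+2}$ one has $\ell'(q)=\ell(pq)\le0$. Thus $\ell'$ separates $p'\Sigma(X')_{\le 2d}$ from $\Sigma(X')_{\le 2d+2}$, with $p'$ \emph{strictly positive} on $X'$; now Theorem~\ref{THM pos} (at the original degree $d$) forces $\ell'\equiv 0$. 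Hence $\ell$ is supported on $V$, so $\ell$ vanishes identically on $p\Sigma(X)_{\le 2d+2}$, which is what one needs. You actually gestured at the right reduction (``apply Theorem~\ref{THM pos} on the subset $X_0=X\setminus Z$'') but then tried to build the multiplier on the primal side; the trick is to stay on the dual side and exploit the identity $\ell'(p'q)=\ell(p^2q)$.
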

\begin{proof}
It is equivalent to show that any linear functional in $\RR[X]^*_{\leq 2d+4}$ which separates $p\Sigma(X)_{\leq 2d+2}$ and $\Sigma(X)_{\leq 2d+4}$ is identically zero on $p\Sigma(X)_{\leq 2d+2}$. Let $\ell$ be such a functional. We can write
$$\ell=\sum_{v_i \in X}\mu_i \ell_{v_i}, \,\,\,\, \mu_i \in \RR.$$

Let $V \subsetneq X$ be the variety of $p$ in $X$ and let $X'=X \setminus V$. Let $p^\prime$ be the image of $p$ under the canonical projection from $\RR[X]$ to $\RR[X']=\RR[X]/I(X')$. Let $\ell' \in \RR[X']^*_{\leq 2d+2}$ be the linear functional given by
$$\ell'=\sum_{v_i \in X}\mu_i p(v_i)\ell_{v_i}=\sum_{v_i \in X'}\mu_ip'(v_i)\ell_{v_i}.$$
We claim that $\ell'$ separates $p'\Sigma(X')_{\leq 2d}$ from $\Sigma(X')_{\leq 2d+2}$. Indeed for any $q \in \Sigma(X)_{\leq 2d}$ we have $$\ell'(p'q)=\ell(p^2q) \geq 0,$$ 
while for any $q \in \Sigma(X')_{\leq 2d+2}$ we have $$\ell'(q)=\ell(pq)\leq 0.$$
By Theorem \ref{THM pos} it follows that $\ell'$ must be identically zero, which implies that $\ell$ is defined only in terms of evaluations on points of $V$. Thus $\ell$ vanishes identically on $p\Sigma(X)_{\leq 2d+2}$.

\end{proof}

\section{Lower Bound on Multipliers}\label{Section:LowerBound}
In this section we prove the lower bound on the degree of rational function representations for polynomials on the hypercube. We deal with $S_n$-invariant polynomials which vanish on a {\em level} $T = \{x \in C: \sum x_i = t\}$ of the hypercube $C=\{0,1\}^n$. Such functions come up naturally in combinatorial optimization, where we are counting objects subject to some symmetric restrictions; see Section \ref{Subsection:MaxCut}. We will show that such functions do not have rational sums of squares representations with multipliers of low degree. 

It will simplify the notation to use subsets of $[n]$ as exponents: $x^{\{1,4\}} = x_1x_4$. The vector space $\RR[C]$ of functions on the hypercube has a basis $\{x^m: m \subseteq [n]\}$ of squarefree monomials. Thus we can write any function $f \in \RR[C]$ as 
$f = \sum_{m \subset [n]} c_mx^m$, and we have $\deg(f) = \max \{|m|: c_m \ne 0\}$. We define $\RR[C]_d$ to be the collection of homogeneous degree-$d$ functions, and $\RR[C]_{\le d}=\oplus_{i=0}^d \RR[C]_i$ the collection of functions of degree at most $d$.

We also need to discuss the notion of divisibility in a coordinate ring. For instance, we may have $f,g,h\in \RR[X]$ with $f=gh$ but $\deg(f) < \deg(g) + \deg(h)$; in the case of the hypercube, $x \cdot x = x$. To fix this, for $f,g \in \RR[X]$, we say that $g$  \textit{properly divides $f$} if there exists $h \in \RR[X]$ such that $f=gh$ and $\deg(f) = \deg(g) + \deg(h)$. 
We will also say that \textit{$g$ properly divides $f$ to order $m$} if $g^m$ properly divides $f$, but $g^{m+1}$ does not.

We note that the symmetric group $S_n$ acts on $\RR[C]$ by permuting the variables directly: $(123) x_1 = x_2$. To start, we decompose $\RR[C]$ into {\em irreducible $S_n$-modules}. We introduce the necessary background in representation theory of the symmetric group below. For further information see the introduction by Sagan \cite{Sagan}, whose notation we adopt here. 

\subsection{Representation theory of $S_n$}
A {\em partition} of a positive integer $n$ is an ordered tuple $(\lambda_1, \ldots, \lambda_k)$ of positive integers such that $\lambda_1 \ge \ldots \ge \lambda_k$, and $\lambda_1 + \ldots + \lambda_k = n$. Corresponding to each partition is its diagram, where we draw $k$ rows of boxes, with $\lambda_i$ boxes in the $i$th row. For example, the partition $(4,2)$ of $n=6$ has the following diagram: 

\ytableausetup{notabloids}
\begin{equation*}\ydiagram{4,2
}
\end{equation*}

A {\em tableau} of shape $\lambda$ is an assignment of numbers $\{1,\ldots,n\}$ to the boxes in the diagram of $\lambda$. 
A {\em standard} tableau has strictly increasing rows and columns. Here is an example of a tableau and a standard tableau of shape $(4,2)$:

\begin{figure}[htd]
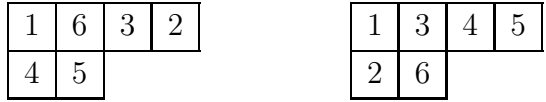

\label{Figure:TableauExample}
\ytableausetup{notabloids}
\begin{align*} \ytableaushort{
1632,45
} \hspace{2cm}
\ytableaushort{
1345,26
}\\
\end{align*}
\caption{A tableau and a standard tableau of shape $(4,2)$.}
\end{figure}

A {\em tabloid} is an equivalence class of tableaux, where we identify two tableaux if the fillings of their rows are the same as subsets of $\{1,\dots,n\}$. 

For a tableau $T$ and an element $\sigma \in S_n$, let $\sigma$ act on $T$ by permuting the numbers in $T$. Then the action of $S_n$ can be extended to tabloids and formal linear combination of tabloids. Formal linear combinations of tabloids of shape $\lambda$ form the \emph{permutation module} $M^\lambda$. 

Let $C_T$ be the {\em column group} of $T$; that is, the subgroup of $S_n$ fixing the columns of $T$. Now we can define the {\em polytabloid} $e_T = \sum_{\sigma \in C_T} \textup{sign}(\sigma) \cdot [\sigma(T)]$, where $[\sigma(T)]$ is the tabloid equivalence class of $\sigma(T)$. 
Now, define the {\em Specht module} $S^\lambda$: $$S^\lambda := \textup{span}( \{e_T: \hbox{$T$ is a standard tableau of shape $\lambda$}\}),$$ which is a submodule of $M^\lambda$. 
Irreducible representations (irreducible $S_n$-modules) of $S_n$ are precisely given by the Specht modules $S^\lambda$, where $\lambda$ is a partition of $n$.

\subsection{Functions on the hypercube $C$ and $S_n$-representations}
Recall that $S_n$ acts on $\RR[C]$ by permuting the variables. In the following we treat $(n,0)$ as an alias for the partition $(n)$ to simplify our notation.  We now define an isomorphism between tabloids and monomials. For  $k \le n/2$ let $M^{(n-k,k)}$ and $S^{(n-k,k)}$ denote the permutation and the Specht modules respectively, corresponding to the partition $(n-k,k)$. 

Define 
$\phi_k: M^{(n-k,k)} \to \RR[C]$ by $\phi_k([m^c,m]) = x^m$, and extend $\phi$ linearly. For example, $\phi_3([12345,678]) = x_6x_7x_8$. The image of $\phi_k$ is the subspace $\RR[C]_k$ of homogeneous functions of degree $k$. We also have $\RR[C]_k \cong \RR[C]_{n-k}$ as $S_n$-modules, since we can take complements in the exponent: if $n=6$, then $x_1x_2 \in \RR[C]_2 \leftrightarrow x_3x_4x_5x_6 \in \RR[C]_4$.
\begin{proposition} \label{Proposition:CubeMultiplicities}
The $S_n$-module $\RR[C]$ decomposes into $n+1-2k$ copies of $S^{(n-k,k)}$, for $0 \le k \le \frac{n}{2}$.
\end{proposition}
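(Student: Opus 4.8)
The plan is to compute the multiplicity of each irreducible $S^{(n-k,k)}$ in $\RR[C]$ by decomposing each homogeneous piece $\RR[C]_j \cong M^{(n-j,j)}$ and summing. The key input is the classical decomposition of the permutation module $M^{(n-j,j)}$ into Specht modules, namely $M^{(n-j,j)} \cong \bigoplus_{k=0}^{j} S^{(n-k,k)}$ (each with multiplicity one), which follows from Young's rule: the multiplicity of $S^\lambda$ in $M^\mu$ is the Kostka number $K_{\lambda\mu}$, and for two-row shapes $K_{(n-k,k),(n-j,j)}$ equals $1$ if $k \le j$ and $0$ otherwise. Since $\phi_j$ identifies $M^{(n-j,j)}$ with $\RR[C]_j$ as $S_n$-modules, and $\RR[C] = \bigoplus_{j=0}^{n} \RR[C]_j$ as $S_n$-modules, the total multiplicity of $S^{(n-k,k)}$ in $\RR[C]$ is $\#\{\, j : k \le j \le n-k \,\}$, where the upper restriction $j \le n-k$ comes from the fact that $\RR[C]_j \cong \RR[C]_{n-j}$ and we only count two-row partitions $(n-k,k)$ with $k \le n/2$ once per homogeneous degree. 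Wait — more carefully: we sum over all $j$ from $0$ to $n$, and for $j > n/2$ we use $M^{(n-j,j)}$ with the convention that this means $M^{(j,n-j)}$, whose Specht decomposition is $\bigoplus_{k=0}^{n-j} S^{(n-k,k)}$. So $S^{(n-k,k)}$ appears in $\RR[C]_j$ precisely when $k \le \min(j, n-j)$, i.e. when $k \le j \le n-k$. Counting the integers $j$ in that range gives $(n-k) - k + 1 = n+1-2k$, which is the claimed multiplicity.

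The steps in order: (1) Recall/cite that $\phi_j$ is an $S_n$-module isomorphism $M^{(n-j,j)} \xrightarrow{\sim} \RR[C]_j$ and that $\RR[C]$ decomposes as the direct sum of the $\RR[C]_j$; (2) invoke Young's rule / the Kostka-number computation to get $M^{(n-j,j)} \cong \bigoplus_{k \le \min(j,n-j)} S^{(n-k,k)}$ (I would cite Sagan here rather than reprove it, since the paper has set up that reference); (3) sum the multiplicities over $j = 0, \ldots, n$ and count lattice points in the interval $[k, n-k]$ to obtain $n+1-2k$; (4) note the range $0 \le k \le n/2$ is exactly the range for which $(n-k,k)$ is a valid partition and the multiplicity $n+1-2k$ is nonnegative.

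The only mild subtlety — and the one place I would be careful — is the bookkeeping for degrees $j$ above $n/2$: one must use the complementation isomorphism $\RR[C]_j \cong \RR[C]_{n-j}$ (equivalently, that $M^{(j,n-j)}$ for $j > n/2$ really is indexed by the partition $(j,n-j)$) so as not to double-count or misindex the Specht constituents. This is not a real obstacle, just a place to state the convention cleanly; everything else is a direct appeal to Young's rule plus an elementary count. An alternative, if one prefers to avoid invoking Young's rule as a black box, is to argue by dimension: the number of standard Young tableaux of shape $(n-k,k)$ times its multiplicity, summed appropriately, must reconstruct $\dim \RR[C] = 2^n$, and the two-row hook-length / ballot-sequence count combined with the Vandermonde-type identity $\sum_k (n+1-2k)\bigl(\binom{n}{k}-\binom{n}{k-1}\bigr) = 2^n$ provides a consistency check; but the Young's-rule argument above is the cleanest route to the exact multiplicities rather than just their dimension-weighted sum.
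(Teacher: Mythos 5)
Your proposal is correct and follows essentially the same route as the paper: identify $\RR[C]_j$ with a two-row permutation module (using complementation for $j>n/2$), apply Young's rule to get $M^{(n-j,j)}\cong\bigoplus_{k\le\min(j,n-j)}S^{(n-k,k)}$, and count degrees; the paper merely organizes the count by pairing degrees $j$ and $n-j$ (getting $2(\lfloor n/2\rfloor-k+1)$ plus the middle degree when $n$ is even) rather than counting the interval $[k,n-k]$ directly.
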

\begin{proof}
By Young's rule (Theorem 2.11.2 in \cite{Sagan}), $M^{(n-k,k)}$ splits into direct sum of $S^{(n-i,i)}$ for $0 \le i \le k$, each coming with multiplicity $1$. By the above, if $k \le n/2$, $\RR[C]_{n-k} \cong \RR[C]_k \cong M^{(n-k,k)}$. 
If $n$ is odd, then 
\begin{align*}
\RR[C] &= \bigoplus_{0 \le k < n/2} (\RR[C]_k \oplus \RR[C]_{n-k} ) \\
&\cong 2 \bigoplus_{0 \le k < n/2}  M^{(n-k,k)} \\
&\cong 2 \bigoplus_{0 \le k < n/2}  \left ( \bigoplus_{i=0}^k S^{(n-i,i)} \right ) \\
&\cong 2  \bigoplus_{i=0}^{\lfloor n/2 \rfloor} \left(\frac{n-1}{2} -i +1 \right) S^{(n-i,i)},
\end{align*}
which gives the result. For even $n$ just add the single copy of $\RR[C]_{n/2} \cong M^{(n/2,n/2)}$.
\end{proof}

Proposition \ref{Proposition:CubeMultiplicities} gives the decomposition of $\RR[C]$ into irreducible submodules. To analyze a specific function $f \in \RR[C]$, we now give an explicit decomposition of $\RR[C]$. We choose a slightly idiosyncratic description which will be useful for our purposes. 
Fix $t \in \RR$ and let $\ell = t - \sum x_i$. Recalling that $S^{(n-k,k)} \subset M^{(n-k,k)}$, define $H_{k0} = \phi(S^{(n-k,k)}) \subseteq \RR[C]_k$. Since $\phi$ is an $S_n$-module isomorphism, we have $H_{k0} \cong S^{(n-k,k)}$. Then for $i = 1, \ldots, n-2k$, define $H_{ki} = (t-\sum_j x_j)^i \cdot H_{k0}$. Note that no element of $H_{k0}$ is properly divisible by $\ell$.

\begin{theorem} \label{Theorem:CubeDecomposition}
$\RR[C]$ has the following decomposition into irreducibles:
$$\RR[C] = \bigoplus_{k=0}^{\lfloor n/2 \rfloor} \left( \bigoplus_{i=0}^{n+1-2k}H_{ki}\right).$$ This decomposition respects degree: for any $d$,
$$\RR[C]_{\le d} = \bigoplus_{k+i \le d} H_{ki}.$$
\end{theorem}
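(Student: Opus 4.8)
\textbf{Proof proposal for Theorem \ref{Theorem:CubeDecomposition}.}

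The plan is to build the decomposition one ``column'' at a time, for each fixed $k$, by understanding how multiplication by $\ell = t - \sum_j x_j$ acts on the homogeneous pieces $\RR[C]_k$. The key structural input is Young's rule, which tells us that $\RR[C]_k \cong M^{(n-k,k)} \cong \bigoplus_{j=0}^k S^{(n-j,j)}$ for $k \le n/2$, and that $\RR[C]_{n-k} \cong \RR[C]_k$. First I would record the basic facts about $\ell$ as a linear operator: multiplication by $\ell$ sends $\RR[C]_{\le d}$ into $\RR[C]_{\le d+1}$ and, more importantly, it is \emph{injective} on each $\RR[C]_{\le d}$ for $d \le n-1$ — this is because a nonzero polynomial of degree $\le n-1$ cannot be a multiple of $\ell$ times something and still have the same support obstruction; equivalently, $\ell$ does not vanish on the variety $C$, and the only way $\ell \cdot g \equiv 0$ on $C$ with $g$ of low degree is $g \equiv 0$. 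I would make this precise by noting that $\ell$ vanishes on exactly the level $\{x : \sum x_i = t\}$, which for generic (indeed, non-integer) $t$ is empty, so $\ell$ is a unit; for the purposes of the degree-counting one only needs that multiplication by $\ell$ is injective in low degrees, which follows from a dimension count together with the fact that $\RR[C]_{\le n} = \RR[C]$ has dimension $2^n = \sum_{i} \binom{n}{i}$ and $\dim \RR[C]_{\le d} = \sum_{i=0}^d \binom{n}{i}$.

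Next I would define $H_{k0} = \phi(S^{(n-k,k)}) \subseteq \RR[C]_k$ and $H_{ki} = \ell^i \cdot H_{k0}$, and prove three things: (a) each $H_{ki}$ is an $S_n$-submodule isomorphic to $S^{(n-k,k)}$ — this is immediate since $\ell$ is $S_n$-invariant, so multiplication by $\ell^i$ is an $S_n$-module map, and it is injective by the previous paragraph, hence an isomorphism onto its image; (b) $H_{ki} \subseteq \RR[C]_{\le k+i}$ but, crucially, $H_{ki} \cap \RR[C]_{\le k+i-1} = 0$, i.e. every nonzero element of $H_{ki}$ has degree \emph{exactly} $k+i$ — here I would use the ``no element of $H_{k0}$ is properly divisible by $\ell$'' observation (which holds because $S^{(n-k,k)}$ sits inside $\RR[C]_k$ with no lower-degree component, and $\ell$ times a lower-degree function would land in degree $< k$), together with a leading-term argument: the top homogeneous part of $\ell^i h$ for $h \in H_{k0}$ homogeneous of degree $k$ is $(-\sum x_j)^i h$, which is nonzero since $\RR[C]$ is a domain in the relevant degree range — actually one must be careful, $\RR[C]$ is \emph{not} a domain, so instead I would argue that the degree-$(k+i)$ part of $\ell^i h$ equals $(-1)^i (\sum_j x_j)^i h$ computed in the polynomial ring and then reduced, and show this reduction is nonzero by evaluating on points of $C$ with exactly $k+i$ coordinates equal to $1$; and (c) the sum $\sum_{k,i} H_{ki}$ is direct, which I would get by comparing dimensions.

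For the dimension count, I would fix a degree $d$ and show $\dim \left( \bigoplus_{k+i \le d} H_{ki} \right) = \sum_{k+i \le d} \dim S^{(n-k,k)} = \sum_{k=0}^{\min(d,\lfloor n/2 \rfloor)} (\text{number of valid } i) \cdot \dim S^{(n-k,k)}$, where the number of valid $i$ for a given $k$ is $\min(d-k, n-2k) + 1$, and compare this with $\dim \RR[C]_{\le d} = \sum_{i=0}^d \binom{n}{i}$. The identity $\binom{n}{j} = \sum_{k \le j, k \le n/2} \dim S^{(n-k,k)}$ (from Young's rule applied to $\RR[C]_j \cong M^{(n-j,j)}$ for $j \le n/2$, and its mirror for $j > n/2$) would let me telescope: summing over $j \le d$, each $S^{(n-k,k)}$ gets counted once for every $j$ with $k \le j \le d$ and simultaneously $j \le n-k$ (from the mirror symmetry), which is exactly $\min(d, n-k) - k + 1 = \min(d-k, n-2k)+1$ copies — matching the count of $H_{ki}$'s. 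Since by part (b) each $H_{ki}$ contributes to $\RR[C]_{\le d}$ exactly when $k + i \le d$, and these modules have the right total dimension and each is isomorphic to $S^{(n-k,k)}$, the sum must be direct and must equal $\RR[C]_{\le d}$; taking $d = n$ gives the full decomposition, and the fact that $H_{ki} \subseteq \RR[C]_{\le d} \iff k+i \le d$ gives the degree-respecting statement.

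The main obstacle I anticipate is part (b) — controlling the exact degree of elements of $H_{ki}$ — because $\RR[C]$ is not an integral domain (for instance $x_i^2 = x_i$), so one cannot blithely say that $\ell^i$ times a degree-$k$ element has degree $k+i$; the multiplication could in principle produce cancellation in the top degree. The resolution is to pass to the leading (top-degree) homogeneous component and observe that in $\RR[C]$, the top-degree part of a product $fg$ with $\deg f + \deg g \le n$ is the reduction of (top of $f$)(top of $g$), and then to verify non-vanishing by point evaluation on hypercube points of maximal appropriate weight, where the squarefree monomial basis makes the values transparent. Once this ``proper divisibility'' bookkeeping is pinned down, everything else is Young's rule plus a binomial-coefficient accounting identity.
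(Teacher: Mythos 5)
Your overall architecture matches the paper's: define $H_{ki}=\ell^i H_{k0}$, get the multiplicity count from Young's rule, and reduce everything to showing that every nonzero element of $H_{ki}$ has degree exactly $k+i$. But that crux --- your part (b) --- is not actually proved by what you propose. The degree-$(k+i)$ part of $\ell^i h$, for nonzero homogeneous $h\in\RR[C]_k$, is (up to sign and a positive multiple) the image of $h$ under the $i$-fold ``up operator'' $U^i\colon\RR[C]_k\to\RR[C]_{k+i}$ sending a squarefree monomial $x^m$ to the sum of the $x^{m'}$ with $m\subset m'$ and $|m'|=|m|+i$. What you must show is that $U^i$ is \emph{injective} for $i\le n-2k$, i.e.\ that this image is a nonzero formal linear combination. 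Your proposed fix --- evaluating at hypercube points with exactly $k+i$ ones --- only certifies that a combination \emph{already known} to be nonzero as a formal sum is nonzero as a function on $C$ (the value of a homogeneous degree-$(k+i)$ element at the indicator of an $(k+i)$-set $S$ is just the coefficient of $x^S$); it says nothing about why $U^i(h)\ne 0$ when $h\ne 0$. That injectivity is a genuine theorem (full rank of the inclusion matrices of the Boolean lattice, i.e.\ Gottlieb's theorem). The paper imports it as Corollary 2.11 of Stanley, which gives bijectivity of $f\mapsto(\sum_j x_j)^{n-2k}f$ from $\RR[C]_k$ to $\RR[C]_{n-k}$ and hence injectivity of each intermediate power. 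Without citing or proving this, your argument has a hole exactly at the step you yourself flagged as the main obstacle.

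A second, smaller problem: your ``basic fact'' that multiplication by $\ell$ is injective on $\RR[C]_{\le d}$ for $d\le n-1$ is false when $t\in\{0,\dots,n\}$ --- which is the case the paper actually needs later, since Lemma \ref{Lemma:Vanishing} takes $t$ to be a level. For $n=2$, $t=1$ one has $(1-x_1-x_2)(x_1-x_2)=0$ in $\RR[C]$, with $x_1-x_2$ of degree $1=n-1$. Neither of your justifications repairs this: a dimension count cannot prove injectivity of a map into a larger space, and the ``$\ell$ is a unit'' remark applies only to non-integer $t$. The statement you need is injectivity of $f\mapsto\ell^i f$ on the homogeneous piece $\RR[C]_k$ for $i\le n-2k$, which is again exactly the Stanley/Gottlieb input. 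Finally, your concluding dimension count is fine, but directness should be argued explicitly --- within a fixed $k$ from the exact-degree statement, and across different $k$ from non-isomorphism of the irreducibles $S^{(n-k,k)}$ --- rather than deduced ``by comparing dimensions,'' which by itself establishes neither directness nor spanning.
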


\begin{proof}
By Proposition \ref{Proposition:CubeMultiplicities}, the above decomposition contains the correct number of each irreducible $S_n$-module. Therefore, it remains to show that the summands are linearly independent. 

By Corollary 2.11 in \cite{stanley}, the map $U: \RR[C]_k \to \RR[C]_{n-k}$ 
given by $U(f)=(\sum x_j)^{n-2k}f$ is a bijection. 
Therefore, the map $U': \RR[C]_k \to \RR[C]_{\le k + i}$ given by $f \mapsto  (t - \sum x_j)^{i}f$ is injective for $i \le n-2k$, 
by consideration of the top degree terms of $U'(f)$. 
Since $H_{ki} = U'(H_{k0})$, we have that $\deg(f) = k+i$ for each nonzero $f \in H_{ki}$; in particular, $H_{ki} \ne 0$. Since $S_n$ acts trivially on $(t - \sum_j x_j)^i$, we have $H_{ki} \cong H_{k0}$. By irreducibility, we know that vectors in $H_{ki}$ and $H_{k'i'}$ are linearly independent if $k \ne k'$. It remains to consider $H_{ki}$ for varying $i$; but since each nonzero $f_i \in H_{ki}$ has degree exactly $k+i$, these are linearly independent as well.

The expression for $\RR[C]_{\le d}$ now follows from the linear independence of the modules $H_{ki}$.
\end{proof}

We now show that proper divisibility holds for functions of low degree vanishing on a level $T$, i.e. on the subset of the hypercube where the sum of coordinates is equal to a fixed number $t$. 

\begin{lemma} \label{Lemma:Vanishing}
Let $T = \{x \in C: \sum_i x_i = t\}$, for fixed $t \in \{0,\ldots,n\}$. Suppose $f \in \RR[C]_{\le d}$, and $f$ vanishes on $T$. If $d \le t \le n-d$, then $f$ is properly divisible by $\ell=t-\sum x_i$.
\end{lemma}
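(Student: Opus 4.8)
The plan is to use the explicit degree-graded decomposition of $\RR[C]$ from Theorem \ref{Theorem:CubeDecomposition}. Write $f = \sum_{k+i \le d} f_{ki}$ with $f_{ki} \in H_{ki}$. I want to show that every summand with $i = 0$ vanishes, so that each remaining summand is divisible by $\ell$ and hence $\ell \mid f$ with the degree bookkeeping working out (this last point is exactly the proper divisibility, since $H_{ki} = \ell^i H_{k0}$ and $\deg$ is additive across these summands).

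First I would recall that $T$ is an $S_n$-invariant set, so the evaluation map $\mathrm{ev}_T : \RR[C] \to \RR[T]$, $g \mapsto g|_T$, is a homomorphism of $S_n$-modules. Its kernel $I(T)$ is therefore an $S_n$-submodule of $\RR[C]$, hence a sum of some of the isotypic pieces $H_{ki}$ (by Theorem \ref{Theorem:CubeDecomposition} and the fact that submodules of multiplicity-free-within-each-degree decompositions are spanned by sub-collections of the $H_{ki}$ — more precisely, an $S_n$-submodule is a direct sum of irreducibles, and because $\ell$ is $S_n$-fixed the only subtlety is separating $H_{ki}$ from $H_{k'i'}$ with the same $\lambda$, which is handled by the degree-separation argument already used in the proof of Theorem \ref{Theorem:CubeDecomposition}). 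Since $\ell$ vanishes identically on $T$, each $H_{ki}$ with $i \ge 1$ lies in $I(T)$. So the content of the lemma is: under $d \le t \le n-d$, the pieces $H_{k0}$ with $k \le d$ inject into $\RR[T]$; equivalently, $I(T) \cap \RR[C]_{\le d} \subseteq \bigoplus_{k+i\le d,\ i\ge 1} H_{ki} = \ell \cdot \RR[C]_{\le d-1}$, which is exactly proper divisibility by $\ell$.

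The key step is thus to show $H_{k0}$ maps injectively into $\RR[T]$ for each $k \le d$, under the hypothesis $d \le t \le n-d$. For this it suffices that $H_{k0} \cap I(T) = 0$, and since $H_{k0} \cong S^{(n-k,k)}$ is irreducible, it is enough to show $H_{k0} \not\subseteq I(T)$, i.e. some element of $H_{k0}$ does not vanish on $T$. I would use the $U$-operator input from Stanley already invoked in the proof of Theorem \ref{Theorem:CubeDecomposition}: the relevant statement is that evaluation on $T$ restricted to $\RR[C]_{\le d}$ is injective precisely when $\dim \RR[C]_{\le d} \le |T|$ in the right range, but cleaner is to argue module-theoretically. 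A single copy of $S^{(n-k,k)}$ appears in $\RR[T]$ as long as the level $T$ "sees" degree $k$, and the condition $d \le t \le n - d$ (so in particular $k \le t \le n-k$) guarantees the restriction map on the $k$-th graded piece is nonzero on the $S^{(n-k,k)}$-isotypic component — this is where $t$ being neither too small nor too large matters, since e.g. if $t < k$ then squarefree degree-$k$ monomials restricted to $T$ can collapse. I would make this precise by exhibiting an explicit polytabloid $e_T$ (for a standard tableau of shape $(n-k,k)$) whose image $\phi_k(e_T)$ is a signed sum of $k$-th squarefree monomials that does not vanish at a chosen point of $T$ with exactly $t$ ones, using $k \le t \le n-k$ to place the ones appropriately.

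The main obstacle I anticipate is making the "$H_{k0} \not\subseteq I(T)$" step airtight: one must rule out the possibility that the irreducible $S^{(n-k,k)}$ sitting inside $\RR[C]_k$ as $H_{k0}$ happens to vanish entirely on $T$ even though abstractly $S^{(n-k,k)}$ occurs in $\RR[T]$ — i.e. one needs that it is *this* copy, not a different copy living higher up (which cannot happen here since within $\RR[C]_{\le d}$ the only copy of $S^{(n-k,k)}$ with $i = 0$ is $H_{k0}$ and all others are $\ell$-multiples that *do* vanish on $T$). The resolution is precisely the degree-graded rigidity of Theorem \ref{Theorem:CubeDecomposition}: if $H_{k0} \subseteq I(T)$ then combined with $H_{ki} \subseteq I(T)$ for $i \ge 1$ the multiplicity of $S^{(n-k,k)}$ in $\RR[T]_{\le d}$ would drop below what a direct count of $\dim \RR[C]_{\le d}$ versus $|T|$ permits under $d \le t \le n-d$ — so the numerical hypothesis is exactly what forbids the collapse. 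I would therefore close by a dimension count: under $d \le t \le n-d$, $\mathrm{ev}_T$ is injective on $\RR[C]_{\le d}$ is *false* in general, but injective on the complement of $\ell\RR[C]_{\le d-1}$ it is, and this is all that is needed.
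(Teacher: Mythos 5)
Your proposal follows essentially the same route as the paper: decompose $\RR[C]_{\le d}$ via Theorem \ref{Theorem:CubeDecomposition}, observe that the part of $I(T)$ meeting $W=\bigoplus_{k\le d} H_{k0}$ is a submodule of a multiplicity-free module and hence a direct sum of whole $H_{k0}$'s, and rule each out by exhibiting a polytabloid that does not vanish at a suitably chosen point of $T$. The one step you leave unexecuted is that final evaluation, and your stated plan for it (use $k\le t\le n-k$ to place the $t$ ones so that exactly one monomial of $\phi_k(e_q)$ survives) is precisely how the paper completes the argument, with $\hat x$ supported on the last $t$ coordinates and the tableau with sorted rows.
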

\begin{proof}
Let  $V$ be the $S_n$-submodule of $\RR[C]_{\leq d}$  consisting of polynomials that are properly divisible by $\ell$ and let $$W = H_{00} \oplus \ldots \oplus H_{d0} \cong S^{(n)} \oplus \cdots \oplus S^{(n-d,d)}.$$ 
By Theorem \ref{Theorem:CubeDecomposition} we have $\RR[C]_{\leq d}=V\oplus W$.
Let $U \subset W$ be the $S_n$-submodule of polynomials vanishing on $T$. Since $W$ contains exactly one copy of each irreducible submodule of $\RR[C]_{\leq d}$ it suffices to show that $U=0$. Since the $H_{i0}$ are nonisomorphic irreducible $S_n$-modules, it follows that $$U=\bigoplus_{i \in I} H_{i0},$$ 
where $I$ is a subset of $\{0,\dots,d\}$. Now we claim that polynomials in $H_{i0}$ do not identically vanish on $T$ for all $0\leq i \leq d$. Since $H_{i0}$ is an irreducible $S_n$-module it suffices to exhibit a single polynomial $p \in H_{i0}$ not vanishing on $T$.

To see this, let $q$ be the standard tableau of shape $(n-i,i)$ where the first row contains $\{1,\dots,n-i\}$ and the second row contains $\{n-i+1,\dots,n\}$. Let $\hat x\in C$ be given by $$\hat x=e_{n-t+1}+\dots+e_n,$$
where $e_j$ denotes the $j$-th standard basis vector. 
Since $i\leq t \leq n-i$, the support of $\hat x$ contains the second row of $q$ and does not contain any of the first $i$ entries of the first row of $q$. Consider $p=\phi(e_q)$, $p \in H_{i0}$, where $e_q$ is the polytabloid corresponding to $q$. It follows that $p(\hat x)=1$, since only the monomial $\phi(q)$ is nonzero on $\hat x$ in $\phi(e_q)$ and $\phi(q)(\hat x)=1$.
See Figure \ref{Figure:VanishingTableau} for an example.
\end{proof}

\begin{figure}[htd]
\label{Figure:VanishingTableau}
\ytableausetup{notabloids}
\begin{align*}q &= \ytableaushort{
1234567,89
}\\
\hat x & = (0,0,0,0,0,0,1,1,1) \\
p = \phi(e_q) &= x_8x_9 - x_1x_9 - x_8x_2 + x_1x_2
\end{align*}
\caption{A standard tableau $q$ with sorted rows, and the associated vector $\hat x$. Here $n=9,i=2,t=3$. We have $p(\hat x) = 1$.}
\end{figure}

Now we can prove our main result on lower bounds for the degree of denominators in $\RR[C]$. 

\begin{theorem}\label{Theorem:SosMultipliers}
Suppose $f\in \RR[C]_{\leq t}$ with $t \le n/2$ is an $S_n$-invariant polynomial and $f$ is properly divisible by $\ell=t-(x_1+\dots+x_n)$ to odd order. Then $f$ is not $(d_1,d_2)$-rsos for $d_1\leq \min\left\{\frac{n-\deg f}{2},t\right\}$, $d_2 \le t$.
\end{theorem}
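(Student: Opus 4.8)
The plan is to argue by contradiction via a separating linear functional, exactly as in the upper-bound arguments, but now exploiting the $S_n$-invariance of $f$ to reduce the functional to an $S_n$-invariant one supported on a single level $T$, and then to use Lemma~\ref{Lemma:Vanishing} to force a contradiction from proper divisibility by $\ell$. Concretely, suppose $f$ were $(d_1,d_2)$-rsos, so $fh \in \Sigma(C)_{\le 2d_2}$ for some nonzero $h \in \Sigma(C)_{\le 2d_1}$. Since the cones $f\Sigma(C)_{\le 2d_1}$ and $\Sigma(C)_{\le 2d_2}$ are closed (Lemma~\ref{Lemma:GenClosed}) and pointed, if no such representation exists then there is a linear functional $\ell'$ separating them; the first thing I would do is note that, because $f$ is $S_n$-invariant and both cones are $S_n$-stable, we may average $\ell'$ over $S_n$ and assume $\ell'$ is $S_n$-invariant. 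An $S_n$-invariant functional on $\RR[C]_{\le 2d_2}$ is a linear combination of the ``level evaluations'' $L_j(g) = \sum_{x: \sum x_i = j} g(x)$, so $\ell' = \sum_j \mu_j L_j$.

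Next I would analyze which levels can carry nonzero weight. Here $t = \deg$-parameter of $f$ and $f$ vanishes precisely on the level $T$ where $\sum x_i = t$ (since $f$ is properly divisible by $\ell = t - \sum x_i$ to odd — in particular positive — order, $f$ vanishes on $T$; and by invariance plus the odd-order divisibility, $f$ changes sign across $T$, so $f$ is not a square modulo anything and indeed $f \notin \Sigma(C)$). The key structural input is that the separating functional, restricted appropriately, must ``see'' the sign change of $f$ at $T$. I expect the cleanest route is: consider the quadratic form $Q_{\ell'}$ on $\RR[C]_{\le d_2}$ and the form $g \mapsto \ell'(fg^2) = \ell'$-applied-to-$fg^2$ on $\RR[C]_{\le d_1}$. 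Writing everything level-by-level, $\ell'(g^2) = \sum_j \mu_j L_j(g^2)$ and $\ell'(fg^2) = \sum_j \mu_j f|_j L_j(g^2)$ where $f|_j$ is the constant value of $f$ on level $j$. Nonnegativity of $\ell'$ on $\Sigma(C)_{\le 2d_2}$ and nonpositivity on $f\Sigma(C)_{\le 2d_1}$ translate, by Lemma~\ref{Lemma:Signs} applied after passing to a subset $X'$ of support points as in the proof of Theorem~\ref{Theorem:MainBound}, into dimension inequalities; but the novel ingredient is that because $f|_t = 0$, the level $T$ contributes nothing to $\ell'(fg^2)$, which should let me peel off the level $T$ and invoke Lemma~\ref{Lemma:Vanishing} on the ``numerator side.''

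More precisely, the heart of the argument should be: if $fh = \sum g_i^2$ with $\deg g_i \le d_2 \le t$, then evaluating on $T$ gives $\sum g_i(x)^2 = 0$ for every $x \in T$, hence each $g_i$ vanishes on $T$; since $d_2 \le t \le n - d_2$ (using $t \le n/2$), Lemma~\ref{Lemma:Vanishing} applies and $\ell$ properly divides each $g_i$, so $\ell^2$ properly divides $fh$. Similarly, writing $h = \sum q_j^2$ with $\deg q_j \le d_1 \le \min\{(n-\deg f)/2, t\}$: the point is that $h$ cannot be divisible by $\ell$, because if it were then, since $\ell^2 \mid fh$ properly and $f$ is divisible by $\ell$ only to odd order, we'd get a parity/degree clash — I need to track degrees carefully here, using $d_1 \le (n - \deg f)/2$ precisely to keep $\deg(q_j) + \deg(q_j)$ plus the divisibility count within the range where Lemma~\ref{Lemma:Vanishing}'s hypothesis $d \le t \le n-d$ stays valid. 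The upshot is a contradiction between ``$\ell$ divides $fh$ to an even order'' (forced by $\ell^2 \mid fh$ via the $g_i$ vanishing, together with $\ell$ dividing $h$ to some order) and ``$\ell$ divides $f$ to odd order, and the contribution from $h$ has the wrong parity to fix it.''

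The main obstacle I anticipate is the bookkeeping around proper divisibility orders in the coordinate ring $\RR[C]$: proper divisibility by $\ell$ is not multiplicative in the naive way (since $\ell^i \cdot g$ can have degree less than $i + \deg g$ once $i + \deg g$ exceeds the relevant threshold $n - 2k$ from Theorem~\ref{Theorem:CubeDecomposition}), so I must verify that in the degree range dictated by $d_1 \le (n-\deg f)/2$ and $d_2 \le t$, all the divisibilities I use are genuinely proper and the orders add correctly. I expect this is exactly where the bound $d_1 \le (n-\deg f)/2$ is used — it guarantees $\deg g_i + (\text{divisibility exponent})$ never leaves the safe range $[d, n-d]$ where Lemma~\ref{Lemma:Vanishing} and the injectivity from Stanley's theorem (Corollary 2.11 in \cite{stanley}) both hold — and the parity contradiction then falls out. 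The $S_n$-invariance reduction at the start is routine; the divisibility-order parity argument is the crux.
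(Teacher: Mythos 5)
Your second and third paragraphs do land on the paper's actual mechanism (everything vanishes on the level $T$, Lemma~\ref{Lemma:Vanishing} lets you peel off powers of $\ell$, and the odd order of $\ell$ in $f$ should clash with the even order coming from the sums of squares), but the decisive step is missing, and it is flagged by you as an ``anticipated obstacle'' rather than resolved. The problem is that $\RR[C]$ is not a domain and $\ell$ is a zero divisor, so orders of proper divisibility by $\ell$ are not additive and you cannot conclude anything from ``$fh$ is divisible to even order while $f$ contributes an odd order.'' The paper's resolution is concrete: after symmetrizing, write $h=\sum h_j^2=\ell^{2a}q$, the multiplier as $\ell^{2b}r$, and $f=\ell^c p$ with $c$ odd, where $q,r$ are $S_n$-invariant and strictly positive on $T$ and $p$ is $S_n$-invariant and nonvanishing on $T$; subtract to get $\ell^{2b+c}pr-\ell^{2a}q=0$, factor out $\ell^{\alpha}$ with $\alpha=\min\{2a,2b+c\}$, and observe that the leftover $s$ is a nonzero $S_n$-invariant function that is nonzero on $T$ (here the parity $2a\neq 2b+c$ is used) yet vanishes on $C\setminus T$. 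Hence $s$ is a nonzero multiple of the indicator $\chi_T$, which has degree $n$, while the hypotheses force $\deg s<n$. This is exactly where $d_1\le\frac{n-\deg f}{2}$ enters --- it bounds $\deg(pr\cdot\ell^{2b+c-\alpha})$ below $n$ --- not, as you suggest, to keep Lemma~\ref{Lemma:Vanishing} applicable (for that, $d_1\le t\le n/2$ already suffices). Without the $\chi_T$ degree argument (or an equivalent), your ``parity clash'' is not a contradiction.

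Two further problems. First, your claim that ``$h$ cannot be divisible by $\ell$'' is false and unnecessary: the multiplier may well be divisible by $\ell^{2b}$ for any $b$; what matters is only that its order is even. Second, the opening paragraph is logically confused: to prove $f$ is \emph{not} rsos you cannot invoke a separating functional whose existence is guaranteed ``if no such representation exists'' --- that is assuming the conclusion. A dual-certificate route (exhibiting an explicit $S_n$-invariant functional built from level evaluations that is nonnegative on $\Sigma(C)_{\le 2d_2}$ and strictly negative somewhere on $f\Sigma(C)_{\le 2d_1}$) would be a legitimate alternative, but you do not construct one; you then pivot to the primal argument, so the first half of the proposal does no work. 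Minor further slips: $f$ need not vanish \emph{precisely} on $T$, and the sign-change remark is not used.
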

\begin{proof}
Suppose that $f \sum g_i^2 = \sum h_j^2$ with $g_i\in \RR[C]_{\leq d_1}$, $g_i\neq 0$ and $h_j\in \RR[C]_{\leq d_2}$. Let $g=\sum g_i^2$ and $h=\sum h_j^2$. Without loss of generality we may assume that $g$ and $h$ are $S_n$-invariant polynomials, otherwise we may replace them by their $S_n$-symmetrizations.

Since $d_2 \leq t$ by Lemma \ref{Lemma:Vanishing} we can write $h_j=\ell^{a_j}q_j$ 
with $ \deg q_j=\deg h_j- a_j$ and $q_j$ not vanishing on all of $T$. Therefore, after symmetrizing $h=\sum \ell^{2a_j}q_j^2$ we see that $h=\ell^{2a}q$ where $a=\min a_j$ and $q$ is an $S_n$-invariant polynomial, $\deg q=\deg h-2a$,  and $q$ is strictly positive on $T$.

Similarly, since $d_1 \leq t$ we argue that $g=\ell^{2b}r$, where $r$ is an $S_n$-invariant polynomial strictly positive on $T$, and $\deg r=\deg g -2b$. Finally, $f=\ell^c p$ where $c$ is odd and $p$ is an $S_n$-invariant polynomial not identically zero on $T$ 
with $\deg p=\deg f-c$. Combining, we see that $$\ell^{2b+c}pr-\ell^{2a}q=0.$$
Let $\alpha=\min \{2a,2b+c\}$. By factoring out $\ell^\alpha$ in the equation above we obtain
$$\ell^\alpha s=0,$$
for an $S_n$-invariant polynomial $s \in \RR[C]$ of degree strictly less than $n$ since $d_1\leq \min\left\{\frac{n-t}{2},t\right\}$ and $d_2 \le t$. Since $q$ and $r$ are strictly positive on $T$ and $p$ is not identically zero on $T$
, it follows that $s$ does not vanish on $T$. Thus $s$ is a non-zero symmetric polynomial in $\RR[C]$ vanishing on $C\setminus T$. Therefore $s=\beta \chi_T$ for some constant $\beta \ne 0$, where $\chi_T\in \RR[C]$ is the polynomial vanishing on $C\setminus T$ and equal to $1$ on $T$. However, it is not hard to check that $\deg \chi_T=n$ for any level $T$ and therefore we arrive at a contradiction.

\end{proof}

\begin{corollary}  \label{Corollary:Sos}
Fix $t \leq n/2$ and let $f \in \RR[C]_{\leq t}$ be an $S_n$-invariant polynomial. Suppose that $f$ is properly divisible by $\ell=t-(x_1+\dots+x_n)$ to odd order. Then $f$ is not $d$-sos for $d \le t$.
\end{corollary}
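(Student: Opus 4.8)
The plan is to read this off Theorem \ref{Theorem:SosMultipliers} by taking the trivial sum-of-squares multiplier $h=1$. The point is a bookkeeping observation: saying that $f$ is $d$-sos, i.e.\ $f=\sum_j h_j^2$ with $h_j\in\RR[C]_{\le d}$, is the same as saying $f\cdot g=\sum_j h_j^2$ where $g=1=1^2$ is a nonzero sum of squares lying in $\RR[C]_{\le 0}$. In the terminology of the previous section this says precisely that $f$ is $(0,d)$-rsos.

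So I would argue by contradiction. Suppose $f$ is $d$-sos for some $d\le t$; then $f$ is $(d_1,d_2)$-rsos with $d_1=0$ and $d_2=d$. To apply Theorem \ref{Theorem:SosMultipliers} I need to check its numerical hypotheses. First, $d_1=0\le\min\{(n-\deg f)/2,\,t\}$: this holds because $\deg f\le t\le n/2$ forces $(n-\deg f)/2\ge 0$, and $t\ge 0$. Second, $d_2=d\le t$, which is the assumption. The remaining hypotheses of Theorem \ref{Theorem:SosMultipliers} — that $f\in\RR[C]_{\le t}$ with $t\le n/2$, that $f$ is $S_n$-invariant, and that $f$ is properly divisible by $\ell=t-(x_1+\dots+x_n)$ to odd order — are exactly the hypotheses of the corollary. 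Theorem \ref{Theorem:SosMultipliers} then asserts that $f$ is \emph{not} $(0,d)$-rsos, contradicting the previous paragraph. Hence $f$ is not $d$-sos for any $d\le t$.

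I do not expect any real obstacle here; the only care needed is the identification of an sos decomposition with a constant-denominator rsos decomposition and the verification of the (easy) degree inequality $d_1=0\le\min\{(n-\deg f)/2,t\}$. If one preferred a self-contained argument, one could instead specialize the proof of Theorem \ref{Theorem:SosMultipliers}: from $f\cdot 1=\sum_j h_j^2$ use Lemma \ref{Lemma:Vanishing} to write each $h_j=\ell^{a_j}q_j$ with $q_j$ not vanishing identically on $T$, symmetrize to get $\sum_j h_j^2=\ell^{2a}q$ with $q$ an $S_n$-invariant polynomial strictly positive on $T$, compare with $f=\ell^c p$ where $c$ is odd and $p$ is $S_n$-invariant and not identically zero on $T$, and deduce $\ell^\alpha s=0$ for an $S_n$-invariant $s$ of degree $<n$ that does not vanish on $T$; then $s=\beta\chi_T$ with $\beta\ne 0$, contradicting $\deg\chi_T=n$. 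But invoking Theorem \ref{Theorem:SosMultipliers} directly is cleaner and is what I would write.
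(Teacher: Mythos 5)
Your proof is correct and matches the paper exactly: the paper's proof of this corollary is literally ``Apply Theorem \ref{Theorem:SosMultipliers} with $d_1=0$,'' and your verification of the degree hypotheses (which the paper leaves implicit) is accurate.
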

\begin{proof}
Apply Theorem \ref{Theorem:SosMultipliers} with $d_1=0$.
\end{proof}
Theorem \ref{Corollary:Quadratic} also follows immediately:

\begin{proof}[Proof of Theorem \ref{Corollary:Quadratic}]
Apply Theorem \ref{Theorem:SosMultipliers}.
\end{proof}


\section{Applications}\label{Section:Applications}
We give two applications of our results. Section \ref{Subsection:MaxCut} deals with the MAXCUT problem on $K_n$, and is an application to combinatorial optimization. Section \ref{Subsection:Hilbert} deals with lower degree bounds in Hilbert's 17th problem.

\subsection{The maxcut problem}\label{Subsection:MaxCut}
A {\em cut} in a graph arises from a partition of the vertices into two sets $S_1,S_2$, the cut being the collection of all edges from $S_1$ to $S_2$. Note that switching $S_1$ and $S_2$ gives the same cut. We write $C = [S_1,S_2] = [S_2,S_1]$, and let $|S| = $ the number of edges from $S_1$ to $S_2$. A {\em maximal cut} is a cut maximizing $|S|$.

 In the complete graph $K_n$, the maximal cuts come from any partition of $[n]$ into two sets of $n/2$ vertices when $n$ is even, or $(n\pm 1)/2$ when $n$ is odd. We note that a point $v\in C$ naturally defines a cut $S^v=[S_1,S_2]$ via $S_1=\{i \, \mid \, v_i=0\}$ and $S_2=\{i \, \mid \, v_i=1\}$. 

Let $n$ be odd, Let $k=\lfloor \frac{n}{2}\rfloor$ and let $q\in \RR[C]$ be given by $$q=(x_1+\dots+x_n-k)(x_1+\dots+x_n-k-1).$$
We note that for all $v \in C$ we have $q(v)=|S^v|$.



Note that the $q$ defined above is the same polynomial as in Theorem \ref{Corollary:Quadratic}. 
This allows us to reprove and strengthen a result of Laurent. In \cite{moniquestuff}, Theorem 4, it is shown that the Lasserre rank of the cut polytope of $K_n$, for $n$ odd, is at least $\frac{n+1}{2}$. 
This implies that there exists a quadratic polynomial $q \in \RR[C]_{\leq 2}$ such that $q$ is not $\frac{n-1}{2}$-sos. 
In fact the proof by Laurent established this for the same $q$ as above. However from Theorem \ref{Corollary:Quadratic} we know that in fact $q$ is not $\frac{n-1}{2}$-rsos. Further, it was conjectured in \cite{moniquestuff} that the Lasserre rank is precisely $\frac{n+1}{2}$ in this case. 
This is equivalent to saying that any nonnegative quadratic $q \in \RR[C]_{\leq 2}$ that can be written as $q(x)=q_0 + \sum_{i \not = j} q_{ij}x_ix_j$ is $\frac{n+1}{2}$-sos.
While we are not able to show this conjecture, it follows from Corollary \ref{Corollary:QuadraticBound} that any quadratic $q \in \RR[C]_{\leq 2}$ is $\frac{n+1}{2}$-rsos, and from Theorem \ref{Theorem:QuadraticBoundPositive} that even if we demand positive multipliers, $\frac{n+3}{2}$-rsos is enough.


\subsection{Globally nonnegative function with large multipliers}\label{Subsection:Hilbert}

We finish with an application to Hilbert's 17th problem. 

\begin{theorem} \label{Theorem:Global}
Let $k=\lfloor \frac{n}{2}\rfloor$. There exists a polynomial $p$ of degree $4$ nonnegative on $\RR^n$ which is not $k$-rsos in $\RR[x_1,\dots,x_n]$.
\end{theorem}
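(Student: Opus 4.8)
The plan is to homogenize the quadratic counterexample on the hypercube into a global quartic in a way that transfers the rational-sos lower bound. Recall from Theorem \ref{Corollary:Quadratic} that $f = (x_1+\dots+x_n-k)(x_1+\dots+x_n-k-1) \in \RR[C]$ is nonnegative on $C$ but not $k$-rsos, where $k = \lfloor n/2 \rfloor$. The idea is to produce a quartic $p$ on $\RR^n$ whose restriction to the hypercube ``is'' $f$: since on $C$ we have $x_i^2 = x_i$, a natural candidate is
\begin{equation*}
p = \sum_{i=1}^n (x_i^2 - x_i)^2 + \left( \sum_{i=1}^n x_i^2 - k \right)\left( \sum_{i=1}^n x_i^2 - k - 1 \right) + \lambda \sum_{i=1}^n (x_i^2-x_i)^2
\end{equation*}
for a suitably large $\lambda$, or something of this shape; the first term forces any zero of $p$ to have all coordinates in $\{0,1\}$, and on such points the middle term reduces to $f$. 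One must check $p$ is globally nonnegative: off the hypercube the term $\sum (x_i^2-x_i)^2$ is strictly positive and grows, so by choosing coefficients large enough it dominates the (possibly negative, but bounded-below-by-a-quadratic-in-$\sum x_i^2$) contribution of the middle term; on the hypercube $p = f \geq 0$. This is a routine but slightly delicate estimate — it is the kind of Positivstellensatz-flavored bounding argument that one should carry out carefully, picking explicit constants.

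The core step is the transfer of the lower bound. Suppose for contradiction that $p$ is $k$-rsos in $\RR[x_1,\dots,x_n]$, i.e. there is a nonzero sum of squares $h = \sum q_j^2$ with $\deg q_j \le ?$ (the degree budget is whatever ``$k$-rsos'' unpacks to for a quartic, namely $d = k-2$ on the multiplier and $k$ on the product) such that $ph \in \Sigma(\RR^n)$. Reduce this identity modulo the vanishing ideal $I(C) = (x_1^2-x_1, \ldots, x_n^2-x_n)$: passing to $\RR[C]$, the image $\bar p$ of $p$ is exactly $f$ (by the construction above, the $(x_i^2-x_i)^2$ terms die and $\sum x_i^2 \mapsto \sum x_i$), and images of sums of squares are sums of squares in $\RR[C]$ of no larger degree. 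Thus $f \bar h$ is a sum of squares in $\RR[C]_{\le 2k}$ with $\bar h$ a sum of squares in $\RR[C]_{\le 2(k-2)}$ (or whatever the budget is). The one thing to verify is that $\bar h \ne 0$ in $\RR[C]$; this is where one must be a little careful, since a nonzero sos in $\RR[x]$ could in principle vanish on all of $C$. But $h = \sum q_j^2$ vanishes on $C$ only if every $q_j$ vanishes on $C$, and the degree of each $q_j$ is small (at most $k-2 < n$), so one can argue it cannot vanish on the entire hypercube unless it is zero — or alternatively, build the argument to tolerate this by choosing $h$ with minimal vanishing, or invoke that the certificate can be taken with $h$ not divisible by anything in $I(C)$ after dividing out. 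With $\bar h \ne 0$, we have exhibited a $(k-2, k)$-rsos (hence $k$-rsos) certificate for $f$ on $C$, contradicting Theorem \ref{Corollary:Quadratic}.

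The main obstacle I expect is precisely the last point: ensuring the multiplier survives the restriction to $C$, i.e. that $\bar h \neq 0$. If a direct degree argument ($\deg q_j < n$ and $q_j|_C = 0$ forces $q_j = 0$) does not immediately work, the fallback is to modify $p$ — for instance by adding to $p$ a strictly positive global sos that restricts to $0$ on $C$ is impossible, so instead one adds nothing but instead notes that any sos multiplier $h$ for $p$ on $\RR^n$, once we mod out by $I(C)$, can only become zero if $h$ itself lies in $I(C) \cdot \RR[x]$; but then $ph \in I(C)$ would make the sos identity $ph = \sum(\text{squares})$ restrict to $0$ on $C$ trivially, giving no information, which cannot happen if we chose $h$ of minimal ``$I(C)$-multiplicity''. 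Making this minimality argument clean is the real content; the homogenization and global nonnegativity are comparatively mechanical. A secondary, minor obstacle is bookkeeping the exact degree budget so that ``$k$-rsos for the quartic $p$'' maps to ``$k$-rsos for the quadratic $f$'' with matching constants $s = 2$ and $d = k - s = k-2$.
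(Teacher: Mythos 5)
You have the right overall strategy -- restrict a hypothetical global certificate $ph=g$ modulo the hypercube ideal and contradict Theorem \ref{Corollary:Quadratic} -- and you correctly identify the crux, namely that the multiplier $h$ might die when restricted to $C$. But neither of your proposed resolutions works, and this is exactly where the paper's proof has an idea you are missing. The ``direct degree argument'' fails: a nonzero $q_j$ with $\deg q_j \le k-2$ can perfectly well vanish on all of $C$, since $I(C)$ contains the degree-$2$ polynomials $x_i^2-x_i$; so for $n\ge 8$ nothing prevents $\bar h=0$. The ``minimal $I(C)$-multiplicity'' fallback is not an argument: the set of valid multipliers for $p$ could a priori consist entirely of elements of $I(C)$, and $I(C)$ is not principal, so there is no clean notion of dividing out. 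The paper's fix is to move the variety rather than the multiplier: for $\alpha\in\RR^n$ consider the shifted hypercube $C_\alpha$ cut out by $(x_i-\alpha_i)(x_i-\alpha_i-1)=0$. A fixed nonzero polynomial $h$ cannot vanish identically on $C_\alpha$ for all $\alpha$ near $0$ (else it would vanish on an open set), while Lemma \ref{Lemma:GenClosed} plus the linear change of variables $x\mapsto x-\alpha$ shows that $p$ remains non-$k$-rsos in $\RR[C_\alpha]$ for all $\alpha$ sufficiently close to $0$, because its image there is a small perturbation of its image in $\RR[C]$ and the non-rsos polynomials form an open set. Choosing $\alpha$ small with $h\not\equiv 0$ on $C_\alpha$ gives the contradiction.

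There is a second, more elementary gap: your candidate $p$ is not globally nonnegative for any $\lambda$. At a vertex $v$ with $\sum v_i=k$ the middle factor vanishes but its gradient does not (it equals $-2v$), while $\sum(x_i^2-x_i)^2$ vanishes to second order there; so $p$ is negative at points of the form $v+\delta e_j$ for small $\delta>0$. The paper repairs this by first replacing $f$ with $f+\epsilon$ for small $\epsilon>0$, which makes $f'+\lambda r$ strictly positive for large $\lambda$ -- but then one must know that $f+\epsilon$ is still not $k$-rsos on $C$, which again requires the closedness of the rsos cone (Lemma \ref{Lemma:GenClosed}). So that lemma is doing essential work in two places in the correct proof, and your write-up invokes it in neither.
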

\begin{proof}
Let $f\in \RR[x_1,\dots,x_n]$ be given by $$f=(x_1+\dots+x_n-k)(x_1+\dots+x_n-k-1).$$
By Corollary \ref{Corollary:Quadratic} we know that $f$ is not $k$-rsos in $\RR[C]$. Using Lemma \ref{Lemma:GenClosed} with $X = C$ it follows that $f+\epsilon$ is not $k$-rsos in $\RR[C]$ for all sufficiently small $\epsilon>0$. Let $f'=f+\epsilon$ for a fixed such $\epsilon$.

Let $r=\sum_{i=1}^n (x_i^2-x_i)^2$. For sufficiently large $\lambda>0$ the polynomial $p=f'+\lambda r$ is strictly positive on $\RR^n$. 
Suppose that $p$ is $k$-rsos in $\RR[x_1,\dots,x_n]$: we have $ph=g$ with $(k-2)$-sos non-zero polynomial $h$, and $k$-sos polynomial $g$.

For $\alpha=(\alpha_1,\dots,\alpha_n) \in \RR^n$ let $C_{\alpha}$ be the hypercube given by equations $(x_i-\alpha_i)(x_i-\alpha_i-1)=0$. By Lemma \ref{Lemma:GenClosed} it follows that $p$ is not $(k-1,k)$-sos in $\RR[C_{\alpha}]$ for all $\alpha$ sufficiently close to $0$, since by linear change of variables it suffices to consider a small perturbation of $p$ in $\RR[C]$. However, there exist $\alpha$ arbitrarily close to $0$ such that $h \not \equiv 0$ in $\RR[C_{\alpha}]$. This is a contradiction since it follows that $p$ is $k$-rsos in $\RR[C_\alpha]$ for such $\alpha$.
\end{proof}

\bibliographystyle{plain}
\bibliography{main}
\end{document}